\documentclass[smallextended,envcountsect,envcountsame,runningheads]{svjour3}
\usepackage{amsmath}
\usepackage{amsfonts}
\usepackage{amssymb}
\usepackage{graphicx}
\usepackage{booktabs}
\usepackage{algorithm}
\usepackage{algorithmic}
\usepackage{placeins}
\usepackage[hidelinks]{hyperref}

\smartqed

\title{Random-subspace Ellipsoidal Trust-region Methods for Derivative-free Optimization\thanks{\textbf{Funding:} Chen's research is partially funded by the Killam Doctoral Scholarship and the UBC Okanagan Distinguished Doctoral Scholar Award. Xie's research is partially supported by Laboratory Directed Research and Development (LDRD) funding from Lawrence Berkeley National Laboratory and by the U.S. Department of Energy, Office of Science, Office of Advanced Scientific Computing Research applied mathematics program (SEAZOTIE) under Contract Number DE-AC02-05CH11231.}}
\titlerunning{Random-subspace Ellipsoidal Optimization}
\author{Yiwen Chen \and Pengcheng Xie}
\authorrunning{Yiwen Chen and Pengcheng Xie}
\institute{Yiwen Chen \at
    Department of Mathematics, University of British Columbia, Kelowna, British Columbia, V1V 1V7, Canada. \\
    \email{yiwchen@student.ubc.ca}
    \and
    Pengcheng Xie \at
    Applied Mathematics and Computational Research Division, Lawrence Berkeley National Laboratory. \\
    \email{pxie@lbl.gov}
}
\journalname{}
\date{}

\begin{document}

\maketitle

\begin{abstract} 
    Random-subspace model-based methods offer a promising approach to solving derivative-free optimization (DFO) problems in high dimensions. However, the usual ratio of actual to predicted reduction assesses model agreement at the trial point without directly measuring subspace quality. An accurate prediction in a poorly aligned subspace may therefore trigger trust-region expansion in all directions. We introduce Random-subspace Ellipsoidal TRust-region Optimization (RETRO), a framework for unconstrained DFO using ellipsoidal trust regions. Its trust-region updates combine global size changes with additional expansion or contraction within the sampled subspace, reflecting where the trial step was computed. We define an ellipsoidal subspace alignment measure that accounts for both the subspace and trust-region shape, and analyze Haar-uniform sampling and metric-guided sampling based on the current shape matrix. We establish almost-sure first-order liminf convergence and an $\mathcal{O}(\epsilon^{-2})$ bound on the expected number of iterations required to reach an iterate with gradient norm at most $\epsilon$. Numerical experiments on projected low-rank problems in dimension $1000$ show that using ellipsoidal shape information in subspace selection can reduce function evaluation costs relative to ball-based QARSTA, with the most pronounced improvements occurring when the solver subspace dimension matches the projection rank.
\end{abstract}

\keywords{Derivative-free optimization \and Large-scale optimization \and Randomized subspace methods \and Ellipsoidal trust regions}
\subclass{90C56 \and 65K05 \and 90C06}

\section{Introduction}
    We consider optimization problems of the form
    \begin{equation*}
        \min_{\boldsymbol{x}\in\mathbb{R}^n} f(\boldsymbol{x}),
    \end{equation*}
    where $f:\mathbb{R}^n\to\mathbb{R}$ is smooth and evaluated through a black box. The black box may be a computer simulation, a laboratory experiment, or a computational pipeline for which reliable derivatives are unavailable. Derivative-free optimization (DFO) methods address this setting using function evaluations alone~\cite{audet2017derivative,conn2009introduction,larson2019derivative}, with applications ranging from energy systems to materials science and engineering design~\cite{alarie2021two}. One major class of DFO methods, called model-based methods, constructs models to approximate objective and constraint functions~\cite{audet2020model,larson2019derivative,liuzzi2019trust}. However, constructing these models in the full space $\mathbb{R}^n$ becomes costly as the dimension $n$ increases. Indeed, determined linear and quadratic interpolation models in $\mathbb{R}^n$ require $n+1$ and $(n+1)(n+2)/2$ sample points, respectively~\cite{conn2009introduction}.

    Random subspaces offer a promising way to reduce the cost of model construction. At each iteration, a model is constructed and minimized in a low-dimensional subspace, which changes as the method progresses. This approach has been developed within the classical trust-region framework~\cite{conn2000trust}. Cartis and Roberts~\cite{cartis2023scalable} introduced a general random-subspace model-based trust-region framework with convergence and complexity guarantees and developed a practical method for nonlinear least-squares problems. QARSTA~\cite{chen2024qfully} constructs quadratic interpolation models for general smooth objectives, while subsequent work studies second-order convergence~\cite{cartis2026randomized}. Related developments address stochastic objectives~\cite{dzahini2024stochastic} and convex constraints through CLARSTA~\cite{chen2026clarsta}. The convergence of these methods relies on both the accuracy of the model within the selected subspace and the ability of the subspace to retain useful directions of descent.

    However, the usual trust-region update does not directly account for the quality of the selected subspace. In classical trust-region methods, step acceptance and radius updates are guided by the ratio of actual to predicted reduction~\cite{conn2000trust}. For a subspace model $\widehat m_k$ and a step $\boldsymbol{s}_k=\boldsymbol{Q}_k\widehat{\boldsymbol{s}}_k$, this ratio is
    \begin{equation*}
        \rho_k= \frac{f(\boldsymbol{x}_k)-f(\boldsymbol{x}_k+\boldsymbol{s}_k)}
         {\widehat m_k(\boldsymbol{0})-\widehat m_k(\widehat{\boldsymbol{s}}_k)},
    \end{equation*}
    where the columns of $\boldsymbol{Q}_k$ form an orthonormal basis of the sampled subspace. A value of $\rho_k$ close to one indicates that the model predicts the reduction at the trial point accurately. However, a subspace nearly orthogonal to the gradient may offer little descent even when that descent is predicted perfectly. Thus, $\rho_k=1$ can occur for a poorly aligned subspace. Moreover, model accuracy within that subspace provides no guarantee of accuracy outside it. Nevertheless, a sufficiently large $\rho_k$ can lead to an increase in the single radius that controls the trust region in every direction. In this sense, a successful iteration may give misleading confidence in the trust-region size in directions that the trial step has not tested.  This observation motivates updating the trust region differently in different directions, according to where the step was computed.

    To address this issue, we introduce the framework Random-subspace Ellipsoidal TRust-region Optimization (RETRO). RETRO replaces ball trust regions with ellipsoidal trust regions of the form
    \begin{equation*}
        \mathcal{T}_k=\{\boldsymbol{x}_k+\boldsymbol{s}:\boldsymbol{s}^{\top}
      \boldsymbol{S}_k\boldsymbol{s}\le\Delta_k^2\},
    \qquad \boldsymbol{S}_k\succ0.
    \end{equation*}
    The scalar $\Delta_k$ controls the overall size, and $\boldsymbol{S}_k$ controls the shape. Trial steps are computed within the intersection of this ellipsoid and the sampled affine subspace. The update combines a global change in size with an additional expansion or contraction within the sampled subspace. Thus, RETRO retains the usual acceptance ratio while allowing the trust region to change differently within the current subspace and in its orthogonal complement.

    Related concerns about interpreting successful iterations arise in probabilistic and stochastic trust-region methods. Bandeira, Scheinberg, and Vicente~\cite{bandeira2014convergence} allow models to satisfy accuracy requirements only probabilistically. Chen, Menickelly, and Scheinberg~\cite{chen2018stochastic} additionally use stochastic function estimates. Both supplement the ratio test with a model-gradient test before accepting a step and expanding the region. Cao, Berahas, and Scheinberg~\cite{cao2024first} use a relaxed ratio to accommodate noisy evaluations and a model-gradient test to control radius changes after acceptance. In these works, the additional tests account for uncertainty in the models or function evaluations. Our focus is on the information available when a step is restricted to a subspace: even with exact function values and an accurate subspace model, a successful step provides no direct information about directions outside that subspace. RETRO uses this distinction to update the trust region differently within the sampled subspace and outside it.

    Ellipsoidal trust regions also occur in scaled trust-region methods.  More recently, Hu et al. proposed MATRO \cite{hu2026matro}, which uses the Hessian of a full-space quadratic interpolation model to determine the trust-region shape. RETRO also allows different step lengths in different directions, but its shape update depends on the iteration outcome and the sampled subspace. Thus, while MATRO uses estimated curvature to adapt the trust region, RETRO uses the information obtained from successive subspace steps without requiring a full-space Hessian approximation.

    To analyze RETRO, we introduce an ellipsoidal subspace alignment measure that accounts for both the selected subspace and the shape of the trust region. We study Haar-uniform sampling and metric-guided sampling, which uses the current shape matrix when selecting the next subspace. We prove almost-sure first-order liminf convergence of RETRO and establish an $\mathcal{O}(\epsilon^{-2})$ bound on the expected number of iterations required to find an iterate with $\|\nabla f(\boldsymbol{x}_k)\|\le\epsilon$. Numerical experiments on projected low-rank problems show that RETRO can use the directional information stored in the ellipsoid to reduce function evaluation costs relative to ball-based QARSTA. The greatest improvements occur when the solver subspace dimension matches the projection rank, while tests with larger projection ranks clarify the limits of this benefit.

    The remainder of this paper is organized as follows. We end this section by introducing the notation used in the paper. Section~\ref{sec:framework} introduces ellipsoidal trust regions and presents RETRO, including model construction, step computation, and the rules for updating the size and shape of the trust region. Section~\ref{sec:convergcomplexity} establishes convergence and complexity results. It also analyzes Haar-uniform and metric-guided sampling and verifies the required subspace alignment properties. Section~\ref{sec:numexp} compares practical RETRO variants with ball-based QARSTA on projected low-rank problems. Section \ref*{sec:conclusion} concludes this paper.

\subsection{Notation}
    We write vectors in lowercase boldface and matrices in uppercase boldface (e.g., $\boldsymbol{x}\in\mathbb{R}^n$ and $\boldsymbol{A}\in\mathbb{R}^{n\times z}$).  For a symmetric matrix $\boldsymbol{A}\in\mathbb R^{n\times n}$, we use $\boldsymbol{A}\succ0$ and $\boldsymbol{A}\succeq 0$ to mean that $\boldsymbol{A}$ is positive definite and positive semi-definite, respectively.  For symmetric matrices $\boldsymbol{A}, \boldsymbol{B}\in\mathbb R^{n\times n}$, the notation $\boldsymbol{A}\succeq \boldsymbol{B}$ means that $\boldsymbol{A}-\boldsymbol{B}$ is positive semi-definite.  We use $\boldsymbol{0}_n$ and $\boldsymbol{1}_n$ to denote the all-zero and all-one vectors in $\mathbb R^n$, respectively.  We use $\boldsymbol{e}^i$ to denote the $i$-th coordinate vector with appropriate dimension.  The identity matrix of order $n$ is denoted by $\boldsymbol{I}_n$.  Functions $\mathrm{span}(\cdot)$ and $\mathrm{col}(\cdot)$ denote linear span and column space, respectively, while $\lambda_{\min}(\cdot)$ and $\lambda_{\max}(\cdot)$ denote the smallest and largest eigenvalues of a symmetric matrix.  The symbol $\|\cdot\|$ denotes the Euclidean norm of a vector and the spectral norm of a matrix, and $\|\cdot\|_F$ denotes the Frobenius norm.  We use $\mathcal{B}(\boldsymbol{x};\Delta):=\{\boldsymbol{y}:\|\boldsymbol{y}-\boldsymbol{x}\|\le \Delta\}$ to denote the closed Euclidean ball centered at $\boldsymbol{x}$ with radius $\Delta$. For a nonempty bounded set $D\subseteq\mathbb R^n$, $\mathrm{radi}(D)$ denotes the radius of the smallest Euclidean ball containing $D$.  We use $\sim$ to specify the distribution of a random variable and $\overset{d}{=}$ to indicate that two random quantities have the same distribution.

    We use $\mathcal C^{1+}$ for continuously differentiable functions with Lipschitz continuous gradient, and $\mathcal C^{2+}$ for twice continuously differentiable functions with Lipschitz continuous Hessian.  The subscript $k$ denotes the iteration index; when discussing a generic iteration, we omit this subscript.

\section{Ellipsoidal random-subspace trust-region framework}\label{sec:framework}
    RETRO constructs a model in a sampled subspace and computes a trial step within an ellipsoidal trust region.  The subspace determines the directions available to the step, while the ellipsoid determines how far the step can move in those directions.  Both therefore affect the decrease that can be obtained from the model.  We first describe this geometry and introduce an alignment measure that accounts for the subspace and the ellipsoid together.  We then present the RETRO framework and explain the model construction, step computation, and ellipsoid updates used in each iteration.

\subsection{Ellipsoidal geometry}
    A ball trust region allows steps of the same maximum length in every direction.  An ellipsoidal trust region allows these lengths to differ, so that the region can be enlarged or reduced more in some directions than in others.  We describe its size and shape separately by replacing the usual ball with
    \begin{equation*}
        \mathcal T(\boldsymbol{x};\boldsymbol{S};\Delta)
        :=\{\boldsymbol{y}\in\mathbb R^n: \left(\boldsymbol{y}-\boldsymbol{x}\right)^\top\boldsymbol{S}\left(\boldsymbol{y}-\boldsymbol{x}\right)\le \Delta^2\},
    \end{equation*}
    where $\Delta>0$ is a scalar size parameter and $\boldsymbol{S}\succ0$ is the shape matrix.  For fixed $\boldsymbol{S}$, changing $\Delta$ scales all directional radii by the same factor.  In particular, if $\boldsymbol{S}=\boldsymbol{I}_n$, then $\mathcal{T}(\boldsymbol{x};\boldsymbol{S};\Delta)$ is the usual ball trust region $\mathcal{B}(\boldsymbol{x};\Delta)$.  For a general shape matrix, $\Delta$ need not equal a Euclidean radius.  Indeed, for every unit
    vector $\boldsymbol{u}\in\mathbb R^n$,
    \begin{equation*}
        \mathrm{radi}(\left(\mathcal T(\boldsymbol{x};\boldsymbol{S};\Delta)-\boldsymbol{x}\right)\cap\mathrm{span}(\boldsymbol{u}))
        =\frac{\Delta}{\sqrt{\boldsymbol{u}^\top\boldsymbol{S}\boldsymbol{u}}}.
    \end{equation*}
    Thus a smaller value of $\boldsymbol{u}^\top\boldsymbol{S}\boldsymbol{u}$ permits a longer step along $\boldsymbol{u}$.  To compare the ellipsoid with Euclidean balls, we use its inner and outer radii.  Specifically, $\mathcal{B}(\boldsymbol{x};r^{\mathrm{in}})
        \subseteq\mathcal T(\boldsymbol{x};\boldsymbol{S};\Delta)
        \subseteq \mathcal{B}(\boldsymbol{x};r^{\mathrm{out}})$ where
    \begin{equation*}
        r^{\mathrm{in}}
        :=\frac{\Delta}{\sqrt{\lambda_{\max}(\boldsymbol{S})}}~~\text{and}~~r^{\mathrm{out}}:=\frac{\Delta}{\sqrt{\lambda_{\min}(\boldsymbol{S})}}.
    \end{equation*}

    The algorithm computes trial points in a $p$-dimensional affine subspace $\mathcal Y:=\boldsymbol{x}+\mathrm{col}(\boldsymbol{Q})$, where $1\le p\le n$ and $\boldsymbol{Q}\in\mathbb R^{n\times p}$ has orthonormal columns. Feasible trial points lie in the ellipsoidal section $\mathcal{T}(\boldsymbol{x};\boldsymbol{S};\Delta)\cap\mathcal{Y}$, which in reduced coordinates is
    \begin{equation*}
        \mathcal T(\boldsymbol{0}_p;\boldsymbol{Q}^\top\boldsymbol{S}\boldsymbol{Q};\Delta)=\{\widehat{\boldsymbol{y}}\in\mathbb R^p:\widehat{\boldsymbol{y}}^\top\boldsymbol{Q}^\top\boldsymbol{S}\boldsymbol{Q}\widehat{\boldsymbol{y}}\le\Delta^2\}.
    \end{equation*}
    Similarly, we have $\mathcal{B}(\boldsymbol{0}_p;\widehat{r}^{\mathrm{in}})\subseteq\mathcal T(\boldsymbol{0}_p;\boldsymbol{Q}^\top\boldsymbol{S}\boldsymbol{Q};\Delta)\subseteq  \mathcal{B}(\boldsymbol{0}_p;\widehat{r}^{\mathrm{out}})$ where
    \begin{equation*}
        \widehat{r}^{\mathrm{in}}
        :=\frac{\Delta}{\sqrt{\lambda_{\max}(\boldsymbol{Q}^\top\boldsymbol{S}\boldsymbol{Q})}}~~~\text{and}~~~\widehat{r}^{\mathrm{out}}:=\frac{\Delta}{\sqrt{\lambda_{\min}(\boldsymbol{Q}^\top\boldsymbol{S}\boldsymbol{Q})}}.
    \end{equation*}
    Since $\lambda_{\min}(\boldsymbol{S})\le\lambda_{\min}(\boldsymbol{Q}^\top\boldsymbol{S}\boldsymbol{Q})\le\lambda_{\max}(\boldsymbol{Q}^\top\boldsymbol{S}\boldsymbol{Q})\le\lambda_{\max}(\boldsymbol{S})$, we have $r^{\mathrm{in}}\le\widehat{r}^{\mathrm{in}}\le\widehat{r}^{\mathrm{out}}\le r^{\mathrm{out}}$.

    The shape matrix may change at every iteration.  We bound its eigenvalues uniformly so that the directional radii remain comparable to $\Delta_k$, even as the ellipsoid changes shape.  These bounds also apply to every reduced section and will allow us to express the convergence estimates in terms of the scalar size parameter.  In particular, we suppose that there are constants $\underline{\lambda}_S, \overline{\lambda}_S>0$ independent of $k$ such that
    \begin{equation}\label{ineq:shape-bounds-Sk}
        0\prec\underline\lambda_S\boldsymbol{I}_n
        \preceq\boldsymbol{S}_k
        \preceq\overline\lambda_S\boldsymbol{I}_n~~~\text{for all }k.
    \end{equation}
    Notice that the pair $(\boldsymbol{S}_k,\Delta_k)$ is not a unique description of the geometry: replacing it by $(a\boldsymbol{S}_k,\sqrt a\Delta_k)$, for $a>0$, leaves the ellipsoid unchanged. RETRO starts from a specified pair $(\boldsymbol{S}_0,\Delta_0)$ and updates its two components according to the rules in Subsection~\ref{subsec:ellipsoidupdate}, while maintaining the eigenvalue bounds above. Thus, although different pairs can describe the same ellipsoid, the algorithm uses the particular pair produced by these updates.

\subsection{Ellipsoidal subspace alignment}
    A useful subspace should contain directions along which sufficient decrease in the objective can be achieved.  For a ball trust region, this can be measured by the norm of the gradient projected onto the subspace. For an ellipsoidal trust region, however, the allowed step lengths also play a role, as the same subspace may yield different decreases depending on the shape matrix.  Inspired by the notion of $\alpha$-well-aligned matrices \cite{cartis2023scalable,chen2024qfully,dzahini2024stochastic}, we combine both effects in the following definition.
    \begin{definition}\label{def:ellipsubalignment}
        Let $f\in\mathcal{C}^1$ and $\boldsymbol{x}\in\mathbb{R}^n$.  Let $\boldsymbol{Q}\in\mathbb{R}^{n\times p}$ have orthonormal columns and $\boldsymbol{S}\succ 0$.  We define the ellipsoidal subspace alignment of $f$ at $\boldsymbol{x}$ by
        \begin{equation*}
            \Xi(\boldsymbol{x};\boldsymbol{Q};\boldsymbol{S}):=\begin{cases}
                \frac{\nabla f(\boldsymbol{x})^\top\boldsymbol{Q}
               (\boldsymbol{Q}^\top\boldsymbol{S}\boldsymbol{Q})^{-1}
               \boldsymbol{Q}^\top\nabla f(\boldsymbol{x})}
               {\|\nabla f(\boldsymbol{x})\|^2}, &\text{if $\nabla f(\boldsymbol{x})\neq\boldsymbol{0}_n$},\\
               0, &\text{if $\nabla f(\boldsymbol{x})=\boldsymbol{0}_n$}.
            \end{cases}
        \end{equation*}
    \end{definition}
    In this expression, $\boldsymbol{Q}^\top\nabla f(\boldsymbol{x})$ measures the gradient retained by the subspace, while $(\boldsymbol{Q}^\top\boldsymbol{S}\boldsymbol{Q})^{-1}$ weights its components according to the step lengths permitted by the ellipsoidal section. More precisely, for a nonzero gradient, $\Delta\sqrt{\Xi(\boldsymbol{x};\boldsymbol{Q};\boldsymbol{S})}\|\nabla f(\boldsymbol{x})\|$ is the largest decrease in the linear approximation of $f$ at $\boldsymbol{x}$ over that section, so it accounts for both the available descent directions and the permitted step lengths.
    \begin{remark}
        If $\boldsymbol{S}=\boldsymbol{I}_n$ and $\nabla f(\boldsymbol{x})\ne\boldsymbol{0}_n$, then $\Xi(\boldsymbol{x};\boldsymbol{Q};\boldsymbol{S})$ equals $\|\boldsymbol{Q}^\top\nabla f(\boldsymbol{x})\|^2\slash\|\nabla f(\boldsymbol{x})\|^2$, which is the squared projected-gradient fraction used in the definition of $\alpha$-well-alignment in \cite{cartis2023scalable,chen2024qfully,dzahini2024stochastic}.  The value of $\Xi(\boldsymbol{x};\boldsymbol{Q};\boldsymbol{S})$ is unchanged by replacing $\boldsymbol Q$ by another orthonormal basis of the same subspace.  Moreover, $\Xi(\boldsymbol{x};\boldsymbol{Q};\boldsymbol{S})$ need not lie in $[0,1]$; under Inequality~\eqref{ineq:shape-bounds-Sk} it lies in $[0,1/\underline\lambda_S]$.
    \end{remark}

    The upper bound on the shape matrix ensures that a subspace retaining a sufficiently large fraction of the full gradient also has sufficiently large ellipsoidal subspace alignment.  Indeed, under Inequality~\eqref{ineq:shape-bounds-Sk}, whenever $\nabla f(\boldsymbol{x}_k)\ne\boldsymbol{0}_n$,
    \begin{equation}\label{eq:Xi-lower}
        \Xi(\boldsymbol{x}_k;\boldsymbol{Q}_k;\boldsymbol{S}_k)\ge\frac{\|\boldsymbol{Q}_k^\top\nabla f(\boldsymbol{x}_k)\|^2}{\overline\lambda_S\|\nabla f(\boldsymbol{x}_k)\|^2}.
    \end{equation}

    For simplicity, we denote $\Xi_k:=\Xi(\boldsymbol{x}_k;\boldsymbol{Q}_k;\boldsymbol{S}_k)$.  As we will see in Section \ref{sec:convergcomplexity}, we need to sample subspaces such that $\Xi_k$ exceeds a positive threshold with a uniformly positive conditional probability, as stated in Assumption~\ref{ass:mean-xi}.  In Subsection~\ref{subsec:subspacesampling}, we discuss two sampling methods that satisfy this assumption.

\subsection{Algorithm}
    RETRO, stated in Algorithm~\ref{alg:main}, maintains the current point $\boldsymbol{x}_k$, the shape matrix $\boldsymbol{S}_k$, and the scalar size parameter $\Delta_k$.  At the start of an iteration, we sample a subspace with orthonormal basis $\boldsymbol{Q}_k$ and construct a model $\widehat m_k$ of the objective restricted to that subspace.  Each model is a polynomial of degree at most two, so its Hessian is a fixed matrix.  The model is required to be accurate on the Euclidean ball containing the current ellipsoidal section.  Its accuracy is therefore related to the reduced outer radius $\widehat r_k^{\mathrm{out}}$.  To account for the shape of the section, set
    \begin{equation}\label{eq:Pk-definition}
        \boldsymbol P_k:=(\boldsymbol Q_k^\top\boldsymbol S_k\boldsymbol Q_k)^{-1/2}~~~\text{and so}~~~\widehat r_k^{\mathrm{out}}=\Delta_k\|\boldsymbol P_k\|.
    \end{equation}
    Under the change of variables $\widehat{\boldsymbol{s}}=\boldsymbol{P}_k\widehat{\boldsymbol{y}}$, the reduced ellipsoidal section becomes a ball of radius $\Delta_k$, and the model gradient at its center becomes $\boldsymbol{P}_k\nabla\widehat m_k(\boldsymbol{0}_p)$. This transformation is described in Subsection~\ref{subsec:trsubproblem}.  The global contraction and expansion actions used below will be defined in Subsection~\ref{subsec:ellipsoidupdate}; they combine a change in the overall size with an additional change along the sampled subspace. The criticality test compares the transformed model-gradient norm with $\widehat r_k^{\mathrm{out}}$. If the model gradient is too small relative to this radius, the algorithm keeps the current point and applies the global contraction action.  We call this a criticality iteration.  

    Otherwise, we approximately minimize the model over the ellipsoidal section.  The reduced step $\widehat{\boldsymbol{s}}_k$ gives the full-space trial step $\boldsymbol{s}_k=\boldsymbol{Q}_k\widehat{\boldsymbol{s}}_k$.  The ratio $\rho_k$ compares the actual decrease in the objective with the decrease predicted by the model.  If $\rho_k<\eta_1$, the trial step is rejected and the global contraction action is applied.  If $\eta_1\le\rho_k<\eta_2$, the step is accepted and the ellipsoid keeps its size and shape.  If $\rho_k\ge\eta_2$, the step is accepted and the global expansion action is applied.  The accepted iterations are called successful, and those in the last case are called very successful.

    The contraction and expansion actions adjust both $\Delta_k$ and $\boldsymbol{S}_k$, with an additional change along the sampled subspace.  Their formulas are given in Subsection~\ref{subsec:ellipsoidupdate}.  After the update, eigenvalue clipping enforces Inequality~\eqref{ineq:shape-bounds-Sk} and a cap limits the full-space outer radius.  The next trust region is centered at $\boldsymbol{x}_{k+1}$.  The algorithm stops when its outer radius falls below $\mathrm{radi}_{\min}$.
    \begin{algorithm}
    \caption{RETRO: Random-subspace Ellipsoidal TRust-region Optimization}
    \label{alg:main}
    \begin{algorithmic}[1]
    \REQUIRE $f$, $\boldsymbol{x}_0$, $1\le p\le n$, $\Delta_0>0$,
    $0<\underline\lambda_S\le\overline\lambda_S$, $\boldsymbol{S}_0\succ0$ satisfying Inequality~\eqref{ineq:shape-bounds-Sk}, $0<\gamma_{\mathrm{dec}}^{\mathrm{sub}}<1$,
    $0<\gamma_{\mathrm{dec}}^{\mathrm{ful}}<1$,
    $\gamma_{\mathrm{inc}}^{\mathrm{sub}}>1$,
    $\gamma_{\mathrm{inc}}^{\mathrm{ful}}>1$,
    $0<\eta_1\le\eta_2<1$, $\mu>0$, and
    $0\le\mathrm{radi}_{\min}\le r_0^{\mathrm{out}}\le\mathrm{radi}_{\max}$.
    \FOR{$k=0,1,2,\ldots$}
        \STATE Select $\boldsymbol{Q}_k\in\mathbb R^{n\times p}$ with orthonormal columns, and compute $\boldsymbol P_k$ and $\widehat r_k^{\mathrm{out}}$ from Equations~\eqref{eq:Pk-definition}.
        \STATE Construct $\widehat m_k$ that belongs to a class of $\boldsymbol{Q}_k$-fully linear models of $f$ at $\boldsymbol{x}_k$ parameterized by $\widehat{r}_k^{\mathrm{out}}$.
        \IF{$\mu \|\boldsymbol{P}_k\nabla\widehat m_k(\boldsymbol{0}_p)\|<\widehat{r}_k^{\mathrm{out}}$}
            \STATE Set $\boldsymbol{x}_{k+1}=\boldsymbol{x}_k$ and apply the
            \emph{global contraction} action.
        \ELSE
            \STATE Compute a feasible $\widehat{\boldsymbol{s}}_k$ for Problem~\eqref{pro:subspaceTR} satisfying
            Inequality~\eqref{ineq:Cauchydec_m}, set
            $\boldsymbol{s}_k=\boldsymbol{Q}_k\widehat{\boldsymbol{s}}_k$, and compute
            \begin{equation*}
                \rho_k:=
                \frac{f(\boldsymbol{x}_k)-f(\boldsymbol{x}_k+\boldsymbol{s}_k)}
                {\widehat m_k(\boldsymbol{0}_p)
                 -\widehat m_k(\widehat{\boldsymbol{s}}_k)}.
            \end{equation*}
            \IF{$\rho_k<\eta_1$}
                \STATE Set $\boldsymbol{x}_{k+1}=\boldsymbol{x}_k$ and apply the
                \emph{global contraction} action.
            \ELSIF{$\rho_k<\eta_2$}
                \STATE Set $\boldsymbol{x}_{k+1}=\boldsymbol{x}_k+\boldsymbol{s}_k$
                and apply the \emph{keep} action.
            \ELSE
                \STATE Set $\boldsymbol{x}_{k+1}=\boldsymbol{x}_k+\boldsymbol{s}_k$
                and apply the \emph{global expansion} action.
            \ENDIF
        \ENDIF
        \STATE Set $\boldsymbol{S}_{k+1}$ and $\Delta_{k+1}$ according to Subsection~\ref{subsec:ellipsoidupdate}.
        \IF{$r^{\mathrm{out}}_{k+1}<\mathrm{radi}_{\min}$}
            \STATE \textbf{stop}.
        \ENDIF
    \ENDFOR
    \end{algorithmic}
    \end{algorithm}
    \FloatBarrier

\subsection{Model construction}
    The model must approximate objective values well enough for the acceptance ratio and projected gradients well enough for the criticality test.  Following \cite[Definition 1]{cartis2023scalable} and \cite[Definition 6]{chen2024qfully}, we introduce the notion of $\boldsymbol{Q}$-fully linear models.
    \begin{definition}
        Given $f\in\mathcal{C}^1$, $\boldsymbol{x}\in\mathbb{R}^n$,
        $\overline\Delta>0$, and $\boldsymbol{Q}\in\mathbb{R}^{n\times p}$,
        we say that $\mathcal{M}_{\overline\Delta}
        =\{\widehat m_\Delta:\mathbb{R}^p\to\mathbb{R}\}_{\Delta\in(0,\overline\Delta]}$
        is a class of $\boldsymbol{Q}$-fully linear models of $f$ at
        $\boldsymbol{x}$ parameterized by $\Delta$ if there exist constants
        $\kappa_{ef}(\boldsymbol{x}),\kappa_{eg}(\boldsymbol{x})>0$ such that,
        for all $\Delta\in(0,\overline\Delta]$ and $\|\widehat{\boldsymbol{s}}\|\le\Delta$,
        \begin{align*}
            \left|f(\boldsymbol{x}+\boldsymbol{Q}\widehat{\boldsymbol{s}})
                  -\widehat m_\Delta(\widehat{\boldsymbol{s}})\right|
            &\le\kappa_{ef}(\boldsymbol{x})\Delta^2,\\
            \left\|\boldsymbol{Q}^\top\nabla f(\boldsymbol{x}+\boldsymbol{Q}\widehat{\boldsymbol{s}})
                  -\nabla\widehat m_\Delta(\widehat{\boldsymbol{s}})\right\|
            &\le\kappa_{eg}(\boldsymbol{x})\Delta.
        \end{align*}
    \end{definition}

    Here, ``fully linear'' describes the approximation accuracy, rather than the degree of the model.  In particular, a quadratic model can satisfy this definition.  The constants may depend on the center and the subspace, whereas Assumption~\ref{ass:fullylinear} below requires uniform constants along the algorithm.  The following three constructions from \cite[Section 4.1]{chen2024qfully} provide this accuracy using only function values.

    Write $\widehat f(\widehat{\boldsymbol s}):=f(\boldsymbol x+\boldsymbol Q\widehat{\boldsymbol s})$.  Consider
    \begin{enumerate}
        \item the linear model interpolating $\widehat f$ at $\boldsymbol0_p$ and $\Delta\boldsymbol e^i$, for all $1\le i\le p$;
        \item the quadratic model with diagonal Hessian interpolating $\widehat f$ at $\boldsymbol0_p$, $\Delta\boldsymbol e^i$, and $2\Delta\boldsymbol e^i$,  for all $1\le i\le p$;
        \item the (determined) quadratic model interpolating $\widehat f$ at $\boldsymbol0_p$, $\Delta\boldsymbol e^i$, and $2\Delta\boldsymbol e^i$ for $1\le i\le p$, and at $\Delta(\boldsymbol e^i+\boldsymbol e^j)$ for $1\le i<j\le p$.
    \end{enumerate}
    All three models are uniquely specified by these conditions.  The linear model uses $p+1$ points.  The diagonal quadratic model adds one point along each coordinate direction to estimate curvature, but omits mixed quadratic terms; it is underdetermined among general quadratic polynomials when $p>1$.  The determined quadratic model includes those mixed terms and uses $(p+1)(p+2)/2$ points.  The next proposition shows that all three choices satisfy the required accuracy and Hessian bounds with constants that do not change as the center, subspace, or radius changes.

    \begin{proposition}\label{prop:orthogonal-models}
        Suppose that $f\in\mathcal{C}^{1+}$ with Lipschitz constant $L_{\nabla f}>0$ and $\boldsymbol Q$ has orthonormal columns. Each of the three models above belongs to a class of $\boldsymbol Q$-fully linear models of $f$ at $\boldsymbol x$ parameterized by $\Delta$, with the common bounds
        \begin{equation*}
            \kappa_{ef}=L_{\nabla f}\left(\frac{p+1}{2}+\sqrt p\right)~~~\text{and}~~~\kappa_{eg}=L_{\nabla f}(p+1+\sqrt p).
        \end{equation*}
        Their Hessians satisfy $\|\nabla^2\widehat m\|\le pL_{\nabla f}$. For the linear and diagonal quadratic models this last bound can be replaced by $0$ and $L_{\nabla f}$, respectively. All these bounds are independent of $\Delta$, $\boldsymbol x$, and $\boldsymbol Q$.
    \end{proposition}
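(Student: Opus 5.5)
Everything reduces to the function $\widehat f(\widehat{\boldsymbol s})=f(\boldsymbol x+\boldsymbol Q\widehat{\boldsymbol s})$. Since $\boldsymbol Q$ has orthonormal columns, $\nabla\widehat f(\widehat{\boldsymbol s})=\boldsymbol Q^\top\nabla f(\boldsymbol x+\boldsymbol Q\widehat{\boldsymbol s})$ and
\[
\|\nabla\widehat f(\widehat{\boldsymbol a})-\nabla\widehat f(\widehat{\boldsymbol b})\|\le L_{\nabla f}\|\boldsymbol Q(\widehat{\boldsymbol a}-\widehat{\boldsymbol b})\|=L_{\nabla f}\|\widehat{\boldsymbol a}-\widehat{\boldsymbol b}\|.
\]
Thus $\widehat f\in\mathcal C^{1+}$ on $\mathbb R^p$ with the same constant. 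The two fully linear inequalities are exactly the standard error bounds for $\widehat f$ and a model $\widehat m$ on $\|\widehat{\boldsymbol s}\|\le\Delta$. Because only $L_{\nabla f}$ enters and the sample sets are fixed scalings of coordinate stencils, the resulting constants are automatically independent of $\boldsymbol x$, $\boldsymbol Q$ and $\Delta$.

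\textbf{Hessian bounds.} I bound the Hessians first, since the error bounds use them.
\begin{itemize}
\item The linear model has Hessian $0$.
\item For the diagonal quadratic model, the $i$-th diagonal entry is the second difference
\[
\frac{\widehat f(2\Delta\boldsymbol e^i)-2\widehat f(\Delta\boldsymbol e^i)+\widehat f(\boldsymbol 0_p)}{\Delta^2}.
\]
Writing it as $\Delta^{-2}\int_0^\Delta\bigl[\partial_i\widehat f((t+\Delta)\boldsymbol e^i)-\partial_i\widehat f(t\boldsymbol e^i)\bigr]\,dt$ shows it is at most $L_{\nabla f}$ in absolute value. Hence $\|\nabla^2\widehat m\|\le L_{\nabla f}$.
\item For the determined quadratic model, the diagonal entries are the same. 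The off-diagonal entry is the mixed difference
\[
\frac{\widehat f(\Delta(\boldsymbol e^i+\boldsymbol e^j))-\widehat f(\Delta\boldsymbol e^i)-\widehat f(\Delta\boldsymbol e^j)+\widehat f(\boldsymbol 0_p)}{\Delta^2}.
\]
Writing it as an integral of differences of $\partial_j\widehat f$ along segments displaced by $\Delta\boldsymbol e^i$ again bounds it by $L_{\nabla f}$. All $p^2$ entries are then at most $L_{\nabla f}$, so $\|\nabla^2\widehat m\|\le\|\nabla^2\widehat m\|_F\le pL_{\nabla f}$.
\end{itemize}

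\textbf{Gradient error at the center.} In each case the interpolation conditions along $\boldsymbol e^i$ give
\[
\partial_i\widehat m(\boldsymbol 0_p)=\frac{\widehat f(\Delta\boldsymbol e^i)-\widehat f(\boldsymbol 0_p)}{\Delta}-\frac{\Delta}{2}\bigl[\nabla^2\widehat m\bigr]_{ii}.
\]
The first term differs from $\partial_i\widehat f(\boldsymbol 0_p)$ by at most $L_{\nabla f}\Delta/2$. The second term is $\tfrac\Delta2$ times a diagonal second difference. Since that second difference itself approximates $\partial_i\widehat f$ variation, the two errors should combine to at most $L_{\nabla f}\Delta$ per coordinate. (For the quadratic models the exact combination is $(4\widehat f(\Delta\boldsymbol e^i)-3\widehat f(\boldsymbol 0_p)-\widehat f(2\Delta\boldsymbol e^i))/(2\Delta)$, whose error is bounded directly by an integral estimate.) Summing over $p$ coordinates then gives $\|\nabla\widehat m(\boldsymbol 0_p)-\nabla\widehat f(\boldsymbol 0_p)\|\le\sqrt p\,L_{\nabla f}\Delta$.

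\textbf{Errors at a general point $\|\widehat{\boldsymbol s}\|\le\Delta$.} For the gradient I use
\[
\|\nabla\widehat m(\widehat{\boldsymbol s})-\nabla\widehat f(\widehat{\boldsymbol s})\|\le\|\nabla\widehat m(\boldsymbol 0_p)-\nabla\widehat f(\boldsymbol 0_p)\|+\|\nabla^2\widehat m\|\Delta+L_{\nabla f}\Delta\le(\sqrt p+p+1)L_{\nabla f}\Delta,
\]
which is $\kappa_{eg}$. For the function value, note $\widehat m(\boldsymbol 0_p)=\widehat f(\boldsymbol 0_p)$ and integrate the gradient error along the segment from $\boldsymbol 0_p$ to $\widehat{\boldsymbol s}$. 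This gives
\[
|\widehat f(\widehat{\boldsymbol s})-\widehat m(\widehat{\boldsymbol s})|\le\sqrt p\,L_{\nabla f}\Delta^2+\int_0^1 t\,(L_{\nabla f}+\|\nabla^2\widehat m\|)\Delta^2\,dt=\Bigl(\sqrt p+\frac{p+1}{2}\Bigr)L_{\nabla f}\Delta^2,
\]
which is $\kappa_{ef}$.

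\textbf{Main obstacle.} The delicate step is the per-coordinate center gradient estimate with constant exactly $L_{\nabla f}$ for the quadratic models. It requires expressing the one-sided three-point formula as an integral of gradient differences and bounding it tightly.
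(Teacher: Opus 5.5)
Your proposal is correct and follows the paper's proof essentially step for step: the reduction to $\widehat f$ with the same Lipschitz constant, the entrywise second-difference bounds giving $\|\nabla^2\widehat m\|\le pL_{\nabla f}$ (or $0$ and $L_{\nabla f}$), the per-coordinate center-gradient error $L_{\nabla f}\Delta$, and the triangle-inequality extension to $\|\widehat{\boldsymbol s}\|\le\Delta$. Integrating the gradient error along the segment for the function-value bound is just an equivalent form of the paper's Taylor-plus-quadratic-expansion estimate and yields the same constant. The step you flag as the main obstacle is not delicate: the forward-difference error $L_{\nabla f}\Delta/2$ plus $\tfrac{\Delta}{2}|(\nabla^2\widehat m)_{ii}|\le\tfrac{\Delta}{2}L_{\nabla f}$, from your own diagonal bound, gives $L_{\nabla f}\Delta$ by the triangle inequality, which is exactly how the paper argues, so no finer integral representation of the three-point formula is needed.
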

    \begin{proof}
        As in \cite[Lemma~3]{chen2024qfully}, we can show that $\nabla\widehat f$ is Lipschitz continuous with constant $L_{\nabla f}$. From, e.g., \cite[Lemma~9.4]{audet2017derivative}, we have
        \begin{equation}\label{eq:model-taylor}
            |\widehat f(\widehat{\boldsymbol s})-\widehat f(\boldsymbol0_p)
             -\nabla\widehat f(\boldsymbol0_p)^\top\widehat{\boldsymbol s}|
            \le\frac12L_{\nabla f}\|\widehat{\boldsymbol s}\|^2.
        \end{equation}
        For either quadratic model, interpolation on each coordinate line gives
        \begin{align*}
            (\nabla^2\widehat m)_{ii}
            &=\frac{\widehat f(2\Delta\boldsymbol e^i)-2\widehat f(\Delta\boldsymbol e^i)+\widehat f(\boldsymbol0_p)}{\Delta^2},\\
            (\nabla\widehat m(\boldsymbol0_p))_i
            &=\frac{\widehat f(\Delta\boldsymbol e^i)-\widehat f(\boldsymbol0_p)}\Delta
              -\frac \Delta2(\nabla^2\widehat m)_{ii}.
        \end{align*}
        The numerator in the first equation equals $\int_0^\Delta\boldsymbol e^{i\top}(\nabla\widehat f((\Delta+t)\boldsymbol e^i)-\nabla\widehat f(t\boldsymbol e^i))\mathrm{d}t$.  Consequently $|(\nabla^2\widehat m)_{ii}|\le L_{\nabla f}$, and Inequality~\eqref{eq:model-taylor} gives
        \begin{equation*}
            \left|(\nabla\widehat m(\boldsymbol0_p)-\nabla\widehat f(\boldsymbol0_p))_i\right| \le \frac{1}{\Delta}\left|\widehat f(\Delta\boldsymbol e^i)-\widehat f(\boldsymbol0_p)-\nabla\widehat f(\boldsymbol0_p)^\top(\Delta\boldsymbol e^i)\right| + \frac{L_{\nabla f}}{2}\Delta \le L_{\nabla f}\Delta.
        \end{equation*}
        For the determined quadratic model and $i\ne j$, the mixed entry is
        \begin{equation*}
            (\nabla^2\widehat m)_{ij}
            =\frac{\widehat f(\Delta(\boldsymbol e^i+\boldsymbol e^j))
                    -\widehat f(\Delta\boldsymbol e^i)-\widehat f(\Delta\boldsymbol e^j)
                    +\widehat f(\boldsymbol0_p)}{\Delta^2}.
        \end{equation*}
        Its numerator is
        $\int_0^\Delta\boldsymbol e^{i\top}(\nabla\widehat f(t\boldsymbol e^i+\Delta\boldsymbol e^j)-\nabla\widehat f(t\boldsymbol e^i))\mathrm{d}t$,
        so $|(\nabla^2\widehat m)_{ij}|\le L_{\nabla f}$. The Hessian bound follows from
        \begin{equation*}
            \left\|\nabla^2\widehat m\right\| \le \left\|\nabla^2\widehat m\right\|_F = \sqrt{\sum_{i,j=1}^p(\nabla^2\widehat m)_{ij}^2} \le \sqrt{p^2L_{\nabla f}^2} = pL_{\nabla f}.
        \end{equation*}
        For the linear model the Hessian is zero and 
        \begin{equation*}
            \left|(\nabla\widehat m(\boldsymbol0_p)-\nabla\widehat f(\boldsymbol0_p))_i\right| = \frac{1}{\Delta}\left|\widehat f(\Delta\boldsymbol e^i)-\widehat f(\boldsymbol0_p)-\nabla\widehat f(\boldsymbol0_p)^\top(\Delta\boldsymbol e^i)\right|\le \frac{L_{\nabla f}}{2}\Delta.
        \end{equation*}
        Thus, all three models satisfy
        \begin{equation*}
            \|\nabla\widehat m(\boldsymbol0_p)-\nabla\widehat f(\boldsymbol0_p)\| \le\sqrt p L_{\nabla f}\Delta~~~\text{and}~~~\|\nabla^2\widehat m\|\le pL_{\nabla f}.
        \end{equation*}
        For $\|\widehat{\boldsymbol s}\|\le \Delta$, the claimed constants follow from combining the above bounds, Inequality~\eqref{eq:model-taylor}, the Lipschitz continuity of $\nabla\widehat f$,
        \begin{align*}
            &~~\left|f(\boldsymbol{x}+\boldsymbol{Q}\widehat{\boldsymbol s})-\widehat m(\widehat{\boldsymbol s})\right|\\
            &= \left|\widehat f(\widehat{\boldsymbol s})-\widehat m(\boldsymbol0_p)-\nabla\widehat m(\boldsymbol0_p)^\top\widehat{\boldsymbol s}-\frac{1}{2}\widehat{\boldsymbol s}^\top\nabla^2\widehat m\widehat{\boldsymbol s}\right|\\
            &\le \left|\widehat f(\widehat{\boldsymbol s})-\widehat f(\boldsymbol0_p)
             -\nabla\widehat f(\boldsymbol0_p)^\top\widehat{\boldsymbol s}\right| + \left\|\nabla\widehat f(\boldsymbol0_p)-\nabla\widehat m(\boldsymbol0_p)\right\|\Delta + \frac{1}{2}\left\|\nabla^2\widehat m\right\|\Delta^2,
        \end{align*}
        and 
        \begin{align*}
            &~~\left\|\boldsymbol{Q}^\top\nabla f(\boldsymbol{x}+\boldsymbol{Q}\widehat{\boldsymbol{s}}) -\nabla\widehat m(\widehat{\boldsymbol{s}})\right\|\\
            &= \left\|\nabla\widehat f(\widehat{\boldsymbol s})-\nabla\widehat m(\boldsymbol0_p)-\nabla^2\widehat m\widehat{\boldsymbol{s}}\right\|\\
            &\le \left\|\nabla\widehat f(\widehat{\boldsymbol s})-\nabla\widehat f(\boldsymbol0_p)\right\| + \left\|\nabla\widehat f(\boldsymbol0_p)-\nabla\widehat m(\boldsymbol0_p)\right\| + \left\|\nabla^2\widehat m\right\|\Delta.
        \end{align*}
    \end{proof}

    In RETRO, take $\Delta=\widehat r_k^{\mathrm{out}}$. Proposition~\ref{prop:orthogonal-models} directly verifies Assumptions~\ref{ass:fullylinear} and \ref{ass:bdmodelHess} below. We note that if $f\in\mathcal C^{2+}$, the determined quadratic model also has the stronger fully quadratic accuracy established in \cite[Theorems 5 and 6]{chen2024qfully}. That stronger property is not needed in the analysis of this paper.

    Other quadratic model constructions may also be used, provided that the corresponding accuracy and Hessian bounds can be established. In particular, \cite{chen2026accuracy} derives fully linear bounds for minimum norm, minimum Frobenius norm, and quadratic generalized simplex derivative models. Combined with the equivalence results between full-space and subspace models established in \cite{chen2026relationships}, these bounds can be used to verify Assumptions~\ref{ass:fullylinear} and \ref{ass:bdmodelHess} for the corresponding model constructions.

\subsection{Trust-region subproblem}\label{subsec:trsubproblem}
    Once the criticality test is passed, we seek a decrease in the model within the current ellipsoidal section.  This requires solving a problem in $p$ variables.  The reduced trust-region subproblem is
    \begin{equation}\label{pro:subspaceTR}
        \min_{\widehat{\boldsymbol{s}}\in\mathbb R^p}
        \widehat m_k(\widehat{\boldsymbol{s}})~~~\text{s.t.}~~~\widehat{\boldsymbol{s}}^\top\boldsymbol{Q}_k^\top\boldsymbol{S}_k\boldsymbol{Q}_k
        \widehat{\boldsymbol{s}}\le\Delta_k^2.
    \end{equation}
    Using $\boldsymbol P_k$ from Equations~\eqref{eq:Pk-definition}, set $\widehat{\boldsymbol{y}}:=\boldsymbol{P}_k^{-1}\widehat{\boldsymbol{s}}$.  Then, Problem~\eqref{pro:subspaceTR} is equivalent to minimizing $\widehat M_k(\widehat{\boldsymbol{y}})
    :=\widehat m_k(\boldsymbol{P}_k\widehat{\boldsymbol{y}})$ subject to $\widehat{\boldsymbol y}\in \mathcal{B}(\boldsymbol{0}_p;\Delta_k)$.  This change of coordinates allows us to use the usual step computation for a ball trust region.  The standard Cauchy decrease requirement \cite{conn2000trust,conn2009introduction} on $\widehat M_k(\widehat{\boldsymbol{y}})$ is
    \begin{equation}\label{ineq:Cauchydec_M}
        \widehat M_k(\boldsymbol{0}_p)-\widehat M_k(\widehat{\boldsymbol{y}}_k) \ge c_{\mathrm{tr}}\left\|\nabla\widehat M_k(\boldsymbol{0}_p)\right\|\min\left\{\Delta_k, \frac{\|\nabla\widehat M_k(\boldsymbol{0}_p)\|}{\|\nabla^2\widehat M_k(\boldsymbol{0}_p)\|+1}\right\},
    \end{equation}
    for some $c_{\mathrm{tr}}\in(0,1/2]$ independent of $k$.  Define $\widehat{\boldsymbol{s}}_k:=\boldsymbol{P}_k\widehat{\boldsymbol{y}}_k$.  Then, Inequality \eqref{ineq:Cauchydec_M} transforms into
    \begin{equation}\label{ineq:Cauchydec_m}
        \widehat m_k(\boldsymbol{0}_p)-\widehat m_k(\widehat{\boldsymbol{s}}_k)
        \ge c_{\mathrm{tr}}\left\|\boldsymbol{P}_k\nabla\widehat m_k(\boldsymbol{0}_p)\right\|\min\left\{\Delta_k,\frac{\|\boldsymbol{P}_k\nabla\widehat m_k(\boldsymbol{0}_p)\|}{\|\boldsymbol{P}_k\nabla^2\widehat m_k(\boldsymbol{0}_p)\boldsymbol{P}_k\|+1}\right\}.
    \end{equation}
    This is the quality we need on the solution to the trust-region subproblem.

\subsection{Ellipsoid update}\label{subsec:ellipsoidupdate}
    The ellipsoid update changes the overall size through $\Delta_k$ and adjusts the shape along the sampled subspace through $\boldsymbol{S}_k$.  This allows the next region to reflect the outcome of the current step.  The full-space factor changes all directional radii equally, while the subspace factor makes an additional change along $\mathrm{col}(\boldsymbol{Q}_k)$.  For $\gamma>0$, define $\boldsymbol{M}_k(\gamma):=\boldsymbol{I}_n+(\gamma^{-1}-1)\boldsymbol{Q}_k\boldsymbol{Q}_k^\top$. Given a subspace factor $\gamma^{\mathrm{sub}}>0$ and a full-space factor $\gamma^{\mathrm{ful}}>0$, we let
    \begin{equation*}
        \widetilde{\boldsymbol{S}}_{k+1}:=\boldsymbol{M}_k(\gamma^{\mathrm{sub}})\boldsymbol{S}_k\boldsymbol{M}_k(\gamma^{\mathrm{sub}})~~~\text{and}~~~\widetilde\Delta_{k+1}:=\gamma^{\mathrm{ful}}\Delta_k.
    \end{equation*}
    Then, for any $\boldsymbol{x}\in\mathbb{R}^n$, the directional radii satisfy
    \begin{equation*}
        \mathrm{radi}((\mathcal T(\boldsymbol{x};\widetilde{\boldsymbol{S}}_{k+1};\widetilde\Delta_{k+1})-\boldsymbol{x})
           \cap\mathrm{span}(\boldsymbol{u})) =\gamma^{\mathrm{sub}}\gamma^{\mathrm{ful}}
          \mathrm{radi}((\mathcal T(\boldsymbol{x};\boldsymbol{S}_k;\Delta_k)-\boldsymbol{x})
           \cap\mathrm{span}(\boldsymbol{u}))
    \end{equation*}
    for all unit vectors $\boldsymbol{u}\in\mathrm{col}(\boldsymbol{Q}_k)$, and
    \begin{equation*}
        \mathrm{radi}((\mathcal T(\boldsymbol{x};\widetilde{\boldsymbol{S}}_{k+1};\widetilde\Delta_{k+1})-\boldsymbol{x})\cap\mathrm{span}(\boldsymbol{u})) =\gamma^{\mathrm{ful}}
          \mathrm{radi}((\mathcal T(\boldsymbol{x};\boldsymbol{S}_k;\Delta_k)-\boldsymbol{x})\cap\mathrm{span}(\boldsymbol{u}))
    \end{equation*}
    for all unit vectors $\boldsymbol{u}\in\mathrm{col}(\boldsymbol{Q}_k)^\perp$.  Indeed, notice that we have $\boldsymbol{M}_k(\gamma)\boldsymbol{u}=\gamma^{-1}\boldsymbol{u}$ on $\mathrm{col}(\boldsymbol{Q}_k)$ and $\boldsymbol{M}_k(\gamma)\boldsymbol{u}=\boldsymbol{u}$ on $\mathrm{col}(\boldsymbol{Q}_k)^\perp$. Equivalently, 
    \begin{equation*}
        \mathcal T(\boldsymbol{x};\widetilde{\boldsymbol{S}}_{k+1};\widetilde\Delta_{k+1})
        =\left\{\boldsymbol{x}+\gamma^{\mathrm{ful}}\boldsymbol{M}_k(\gamma^{\mathrm{sub}})^{-1}\left(\boldsymbol{s}-\boldsymbol{x}\right):\boldsymbol{s}\in\mathcal T(\boldsymbol{x};\boldsymbol{S}_k;\Delta_k)\right\}.
    \end{equation*}
    When $\gamma^{\mathrm{sub}}\le1$, the linear part of this affine map has operator norm at most $\gamma^{\mathrm{ful}}$ and $$\mathrm{radi}(\mathcal T(\boldsymbol{x};\widetilde{\boldsymbol{S}}_{k+1};\widetilde\Delta_{k+1})-\boldsymbol{x})
    \le\gamma^{\mathrm{ful}}r_k^{\mathrm{out}};$$ it is a strict contraction when $\gamma^{\mathrm{ful}}<1$.  
    
    To enforce Inequality~\eqref{ineq:shape-bounds-Sk}, let
    $\boldsymbol{S}_{k+1}$ be obtained by clipping the eigenvalues of $\widetilde{\boldsymbol{S}}_{k+1}$ to
    $[\underline\lambda_S,\overline\lambda_S]$.  The scalar parameter is then set to
    \begin{equation}\label{eq:outer-cap}
        \Delta_{k+1}
        :=\min\left\{
           \widetilde\Delta_{k+1},
           \mathrm{radi}_{\max}
           \sqrt{\lambda_{\min}(\boldsymbol{S}_{k+1})}
           \right\}.
    \end{equation}
    
    Clipping limits the shape of the ellipsoid, and Equation~\eqref{eq:outer-cap} ensures that its full-space outer radius does not exceed $\mathrm{radi}_{\max}$.  The three actions in RETRO use the factors listed in Table~\ref{tab:actions}, where $0<\gamma_{\mathrm{dec}}^{\mathrm{sub}}<1$,
    $0<\gamma_{\mathrm{dec}}^{\mathrm{ful}}<1$,
    $\gamma_{\mathrm{inc}}^{\mathrm{sub}}>1$, and
    $\gamma_{\mathrm{inc}}^{\mathrm{ful}}>1$.
    \begin{table}\centering 
    \caption{Subspace and full-space factors for the ellipsoid update.} 
    \label{tab:actions}
    \begin{tabular}{c|cc}
    \toprule
     Action & $\gamma^{\mathrm{sub}}$ & $\gamma^{\mathrm{ful}}$\\ 
     \midrule
     global contraction
       & $\gamma_{\mathrm{dec}}^{\mathrm{sub}}$
       & $\gamma_{\mathrm{dec}}^{\mathrm{ful}}$\\
     keep
       & $1$ & $1$\\
     global expansion
       & $\gamma_{\mathrm{inc}}^{\mathrm{sub}}$
       & $\gamma_{\mathrm{inc}}^{\mathrm{ful}}$\\ 
       \bottomrule 
    \end{tabular}
    \end{table} 
    
    Under these factors, for the global contraction action, we have $\Delta_{k+1}=\gamma_{\mathrm{dec}}^{\mathrm{ful}}\Delta_k$.
    Indeed, notice that $\boldsymbol{M}_k(\gamma_{\mathrm{dec}}^{\mathrm{sub}})\succeq\boldsymbol{I}_n$ gives
    \begin{equation*}
        \lambda_{\min}(\widetilde{\boldsymbol{S}}_{k+1})
        \ge \lambda_{\min}(\boldsymbol{S}_k)
        \lambda_{\min}(\boldsymbol{M}_k(\gamma_{\mathrm{dec}}^{\mathrm{sub}}))^2
        \ge \lambda_{\min}(\boldsymbol{S}_k).
    \end{equation*}
    Since $\lambda_{\min}(\boldsymbol{S}_k)\le\overline\lambda_S$, clipping also gives $\lambda_{\min}(\boldsymbol{S}_{k+1})\ge\lambda_{\min}(\boldsymbol{S}_k)$.
    Together with $\Delta_k\le\mathrm{radi}_{\max}\sqrt{\lambda_{\min}(\boldsymbol{S}_k)}$ and $\gamma_{\mathrm{dec}}^{\mathrm{ful}}<1$, this implies $\widetilde\Delta_{k+1}\le\mathrm{radi}_{\max}\sqrt{\lambda_{\min}(\boldsymbol{S}_{k+1})}$, so the cap in Equation~\eqref{eq:outer-cap} is inactive.
    For the keep action, we have $\Delta_{k+1}=\Delta_k$.  For the global expansion
    action, we have $\Delta_{k+1}\le\gamma_{\mathrm{inc}}^{\mathrm{ful}}\Delta_k$ and equality holds whenever $\gamma_{\mathrm{inc}}^{\mathrm{ful}}\Delta_k\le\mathrm{radi}_{\max}\sqrt{\underline\lambda_S}$.

\section{Convergence and complexity analysis}\label{sec:convergcomplexity}
    In this section, we analyze the convergence and complexity of RETRO.  We first show that a successful iteration yields a decrease that depends on the subspace alignment and the current size parameter.  Then, we show that when the size parameter is sufficiently small and the subspace is sufficiently well aligned, the iteration is very successful and the size parameter is increased.  We combine these results with the update rules to bound the number of iterations required for the gradient norm to become small. Under an appropriate subspace sampling assumption, we then establish an expected complexity bound and almost-sure first-order liminf convergence. Finally, we discuss this sampling assumption and verify it for two subspace sampling strategies: Haar and metric-guided sampling.

    The subspace may be sampled using information from earlier iterations.  We therefore state its probability of sufficient alignment conditional on the information already available.  Let $\mathcal F_{k-1}$ denote the information available immediately before $\boldsymbol{Q}_k$ is sampled; in particular, $\boldsymbol{x}_k$, $\Delta_k$, and $\boldsymbol{S}_k$ are $\mathcal F_{k-1}$-measurable.  Let $\mathcal F_k$ contain all random quantities generated through the end of iteration $k$, so that $\mathcal F_{k-1}\subseteq\mathcal F_k$.

    We make use of the following assumptions.
    \begin{assumption}\label{ass:smoothness-bdbelow}
        The function $f\in\mathcal C^{1+}$ has Lipschitz continuous gradient with constant $L_{\nabla f}>0$ and is bounded below by $f_{\mathrm{low}}\in\mathbb R$.
    \end{assumption}
    \begin{assumption}\label{ass:fullylinear}
        For every $k$, the model $\widehat m_k$ belongs to a class of $\boldsymbol{Q}_k$-fully linear models of $f$ at $\boldsymbol{x}_k$ parameterized by $\widehat{r}_k^{\mathrm{out}}$ with constants $\kappa_{ef}$ and $\kappa_{eg}$ independent of $k$.
    \end{assumption}
    \begin{assumption}\label{ass:bdmodelHess}
        There exists $\kappa_H\ge0$ such that $\|\nabla^2\widehat m_k\|\le\kappa_H$ for all $k$.
    \end{assumption}
    \begin{assumption}\label{ass:CauchyDec}
        Every computed step $\widehat{\boldsymbol{s}}_k$ satisfies the transformed Cauchy decrease \eqref{ineq:Cauchydec_m}.
    \end{assumption}
    \begin{assumption}\label{ass:mean-xi}
        For a given $\epsilon>0$, there exist constants $\overline\tau_\epsilon\in(0,1/\underline\lambda_S]$ and
        $\pi_\epsilon\in(0,1]$, independent of $k$, such that whenever
        $\|\nabla f(\boldsymbol{x}_k)\|>\epsilon$, $\mathbb P(\Xi_k\ge\overline\tau_\epsilon
               \mid\mathcal F_{k-1})\ge\pi_\epsilon$.
    \end{assumption}
    Assumptions~\ref{ass:smoothness-bdbelow} to \ref{ass:bdmodelHess} are standard in the analysis of random-subspace model-based trust-region methods \cite{cartis2023scalable,chen2024qfully,dzahini2024stochastic}, with model accuracy specified here using the outer radius of the ellipsoidal section. Assumption~\ref{ass:CauchyDec} is the standard Cauchy decrease requirement \cite{conn2000trust,conn2009introduction} expressed in the transformed coordinates described in Subsection~\ref{subsec:trsubproblem}, which can be satisfied by standard trust-region subproblem solvers that guarantee a fixed fraction of Cauchy decrease \cite[Chapter~7]{conn2000trust}. Assumption~\ref{ass:mean-xi} requires the sampling rule to produce a sufficiently aligned subspace with a conditional probability bounded below uniformly over the iterations whenever the full gradient norm exceeds the target tolerance. This adapts the probabilistic $\alpha$-well-alignment conditions in \cite{cartis2023scalable,chen2024qfully,dzahini2024stochastic} to the ellipsoidal setting and allows both the alignment threshold and the probability bound to depend on the target tolerance.

    \begin{remark}\label{rem:ordinary-alignment}
        Under Inequality~\eqref{ineq:shape-bounds-Sk}, the probabilistic $\alpha$-well-alignment condition used in \cite{cartis2023scalable,chen2024qfully,dzahini2024stochastic} is sufficient for Assumption~\ref{ass:mean-xi}. Indeed, suppose that, for constants
        $\alpha,\delta_S\in(0,1)$,
        \begin{equation}\label{eq:ordinary-alignment}
            \mathbb P\left(
                \|\boldsymbol{Q}_k^\top\nabla f(\boldsymbol{x}_k)\|
                \ge\alpha\|\nabla f(\boldsymbol{x}_k)\|
                \ \middle|\ \mathcal F_{k-1}\right)\ge1-\delta_S.
        \end{equation}
        Then, Inequality~\eqref{eq:Xi-lower} gives Assumption~\ref{ass:mean-xi} with $\overline\tau_\epsilon=\alpha^2/\overline\lambda_S$ and
        $\pi_\epsilon=1-\delta_S$.
    \end{remark}

\subsection{Successful iterations}
    We first relate the outcome of one successful iteration to the gradient norm, the alignment, and the size parameter.  Suppose that we have run RETRO for $N>0$ iterations and the stopping condition is not triggered.  We define the following subsets of $\mathcal{K}_N:=\{0,\ldots,N-1\}$:
    \begin{itemize}
        \item $\mathcal{C}_N$: the set of all criticality iterations in $\mathcal{K}_N$, i.e., the set of all iterations in $\mathcal{K}_N$ with $\mu \|\boldsymbol{P}_k\nabla\widehat m_k(\boldsymbol{0}_p)\|<\widehat{r}_k^{\mathrm{out}}$.
        \item $\mathcal{S}_N$: the set of all successful iterations in $\mathcal{K}_N$, i.e., the set of all iterations that pass the criticality test in $\mathcal{K}_N$ with $\rho_k\ge\eta_1$.
        \item $\mathcal{V}_N$: the set of all very successful iterations in $\mathcal{K}_N$, i.e., the set of all iterations that pass the criticality test in $\mathcal{K}_N$ with $\rho_k\ge\eta_2$.
        \item $\mathcal{A}_N$: the set of all iterations in $\mathcal{K}_N$ with $\Xi_k\ge\overline{\tau}_\epsilon$.
        \item $\mathcal{D}_N(\Delta)$: the set of all iterations in $\mathcal{K}_N$ with $\Delta_k\ge\Delta$.
    \end{itemize}

    All complements are taken relative to $\mathcal K_N$.  Notice that $\mathcal V_N\subseteq\mathcal S_N$,
    \begin{equation}\label{ineq:metric-criticality-failure}
        \widehat{r}_k^{\mathrm{out}}\le\mu\left\|\boldsymbol{P}_k\nabla\widehat m_k(\boldsymbol{0}_p)\right\|~~~\text{for all $k\notin\mathcal C_N$},
    \end{equation}
    and
    \begin{equation}\label{eq:accepted-decrease}
        f(\boldsymbol{x}_k)-f(\boldsymbol{x}_{k+1})
        \ge\eta_1\bigl(
           \widehat m_k(\boldsymbol{0}_p)
           -\widehat m_k(\widehat{\boldsymbol{s}}_k)
           \bigr)~~~\text{for all $k\in\mathcal S_N$}.
    \end{equation}
    Moreover, as discussed in Subsection \ref{subsec:ellipsoidupdate}, RETRO has $\Delta_{k+1}=\gamma_{\mathrm{dec}}^{\mathrm{ful}}\Delta_k$ for all $k\notin\mathcal S_N$ and $\Delta_{k+1}=\Delta_k$ for all $k\in\mathcal S_N\setminus\mathcal V_N$.  For $k\in\mathcal V_N$, $\Delta_{k+1}\le\gamma_{\mathrm{inc}}^{\mathrm{ful}}\Delta_k$, with equality whenever $\gamma_{\mathrm{inc}}^{\mathrm{ful}}\Delta_k \le\mathrm{radi}_{\max}\sqrt{\underline\lambda_S}$. 

    The next lemma expresses the decrease from a successful iteration in terms of the full gradient.  The factor $\sqrt{\Xi_k}$ accounts for the portion available in the sampled ellipsoidal section.  In particular, when both $\sqrt{\Xi_k}$ and $\Delta_k$ are bounded below, a gradient norm above the target tolerance guarantees a fixed positive decrease.
    \begin{lemma}\label{lem:weighted-successful-decrease}
        Suppose that Assumptions~\ref{ass:smoothness-bdbelow} to \ref{ass:CauchyDec} hold.  If $k\in\mathcal S_N$, then
        \begin{equation*}
            f(\boldsymbol{x}_k)-f(\boldsymbol{x}_{k+1})
            \ge\frac{\eta_1\kappa_{\mathrm{cd}}}{(1+\kappa_{eg}\mu\slash\sqrt{\underline\lambda_S})}\Delta_k\sqrt{\Xi_k}\left\|\nabla f(\boldsymbol{x}_k)\right\|,
        \end{equation*}
        where
        \begin{equation*}
            \kappa_{\mathrm{cd}}:=c_{\mathrm{tr}}\min\left\{1,
            \frac{1}{\mu\sqrt{\overline\lambda_S}(\kappa_H\slash\underline\lambda_S+1)}\right\}.
        \end{equation*}
    \end{lemma}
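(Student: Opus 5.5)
Let $\boldsymbol g_k:=\nabla\widehat m_k(\boldsymbol 0_p)$. The plan is to bound the transformed model gradient $\|\boldsymbol P_k\boldsymbol g_k\|$ from below by a multiple of $\sqrt{\Xi_k}\|\nabla f(\boldsymbol x_k)\|$. We then substitute this into the Cauchy decrease \eqref{ineq:Cauchydec_m} together with \eqref{eq:accepted-decrease}, and use the criticality-test inequality \eqref{ineq:metric-criticality-failure} to control the second argument of the minimum. If $\nabla f(\boldsymbol x_k)=\boldsymbol 0_n$ the claim is trivial since $\Xi_k=0$ and the decrease on $\mathcal S_N$ is nonnegative. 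We therefore assume it is nonzero.

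\emph{Step 1 (relating $\boldsymbol P_k\boldsymbol g_k$ to $\Xi_k$).} By definition, $\sqrt{\Xi_k}\|\nabla f(\boldsymbol x_k)\|=\|\boldsymbol P_k\boldsymbol Q_k^\top\nabla f(\boldsymbol x_k)\|$, since $\boldsymbol P_k^2=(\boldsymbol Q_k^\top\boldsymbol S_k\boldsymbol Q_k)^{-1}$. The triangle inequality and Assumption~\ref{ass:fullylinear} at $\widehat{\boldsymbol s}=\boldsymbol 0_p$ with $\Delta=\widehat r_k^{\mathrm{out}}$ give
\[
\sqrt{\Xi_k}\|\nabla f(\boldsymbol x_k)\|\le\|\boldsymbol P_k\boldsymbol g_k\|+\|\boldsymbol P_k\|\kappa_{eg}\widehat r_k^{\mathrm{out}}.
\]
Now $\|\boldsymbol P_k\|\le1/\sqrt{\underline\lambda_S}$ by \eqref{ineq:shape-bounds-Sk}. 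Since $k\notin\mathcal C_N$, \eqref{ineq:metric-criticality-failure} gives $\widehat r_k^{\mathrm{out}}\le\mu\|\boldsymbol P_k\boldsymbol g_k\|$. Hence
\[
\|\boldsymbol P_k\boldsymbol g_k\|\ge\frac{\sqrt{\Xi_k}\|\nabla f(\boldsymbol x_k)\|}{1+\kappa_{eg}\mu/\sqrt{\underline\lambda_S}},
\]
which produces exactly the denominator in the statement.

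\emph{Step 2 (simplifying the minimum in the Cauchy decrease).} We have $\|\boldsymbol P_k\nabla^2\widehat m_k\boldsymbol P_k\|\le\kappa_H/\underline\lambda_S$ by Assumption~\ref{ass:bdmodelHess}. Also, $\widehat r_k^{\mathrm{out}}=\Delta_k\|\boldsymbol P_k\|\ge\Delta_k/\sqrt{\overline\lambda_S}$, because $\lambda_{\max}(\boldsymbol Q_k^\top\boldsymbol S_k\boldsymbol Q_k)\le\overline\lambda_S$. Combined with \eqref{ineq:metric-criticality-failure}, this yields $\|\boldsymbol P_k\boldsymbol g_k\|\ge\Delta_k/(\mu\sqrt{\overline\lambda_S})$. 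Therefore
\[
\min\Bigl\{\Delta_k,\frac{\|\boldsymbol P_k\boldsymbol g_k\|}{\|\boldsymbol P_k\nabla^2\widehat m_k\boldsymbol P_k\|+1}\Bigr\}\ge\Delta_k\min\Bigl\{1,\frac{1}{\mu\sqrt{\overline\lambda_S}(\kappa_H/\underline\lambda_S+1)}\Bigr\}.
\]
So the model decrease is at least $\kappa_{\mathrm{cd}}\Delta_k\|\boldsymbol P_k\boldsymbol g_k\|$.

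\emph{Step 3 (conclusion).} Multiplying by $\eta_1$ via \eqref{eq:accepted-decrease} and inserting Step 1 gives the claim. The only delicate point is Step 1. There, the model-gradient error must be measured in the transformed norm, so it picks up the factor $\|\boldsymbol P_k\|$. The error is then absorbed using the criticality test rather than a separate case split on whether $\|\boldsymbol P_k\boldsymbol g_k\|$ is small. The eigenvalue bounds translating $\|\boldsymbol P_k\|$ and $\widehat r_k^{\mathrm{out}}$ into $\underline\lambda_S,\overline\lambda_S$ should be routine.
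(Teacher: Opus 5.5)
Your proposal is correct and follows essentially the same argument as the paper. You use the criticality-test bound $\|\boldsymbol P_k\nabla\widehat m_k(\boldsymbol 0_p)\|\ge\Delta_k/(\mu\sqrt{\overline\lambda_S})$ to reduce the Cauchy minimum to $\kappa_{\mathrm{cd}}\Delta_k$, and you absorb the fully linear gradient error through $\widehat r_k^{\mathrm{out}}\le\mu\|\boldsymbol P_k\nabla\widehat m_k(\boldsymbol 0_p)\|$ to get the denominator; the only differences are that you do these two steps in reverse order and add a zero-gradient case, which is harmless but unnecessary because $\sqrt{\Xi_k}\|\nabla f(\boldsymbol x_k)\|=\|\boldsymbol P_k\boldsymbol Q_k^\top\nabla f(\boldsymbol x_k)\|$ holds trivially there as well.
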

    
    \begin{proof}
        Fix $k\in\mathcal S_N$.  Since $\widehat r_k^{\mathrm{out}}=\Delta_k\|\boldsymbol{P}_k\|$ and Inequality \eqref{ineq:shape-bounds-Sk} implies
        \begin{equation}\label{ineq:shape-bounds-Pk}
            \frac{1}{\sqrt{\overline\lambda_S}}
            \le\|\boldsymbol{P}_k\|
            \le\frac{1}{\sqrt{\underline\lambda_S}},
        \end{equation}
        Assumption~\ref{ass:bdmodelHess} gives $\|\boldsymbol{P}_k\nabla^2\widehat m_k(\boldsymbol{0}_p)\boldsymbol{P}_k\|\le\kappa_H\|\boldsymbol{P}_k\|^2 \le\kappa_H\slash\underline\lambda_S$.  Moreover, because a successful iteration passes the criticality test,
        Inequality \eqref{ineq:metric-criticality-failure} yields
        \begin{equation*}
            \left\|\boldsymbol{P}_k\nabla\widehat m_k(\boldsymbol{0}_p)\right\|\ge\frac{\widehat r_k^{\mathrm{out}}}{\mu}
            =\frac{\Delta_k\|\boldsymbol{P}_k\|}{\mu}
            \ge\frac{\Delta_k}{\mu\sqrt{\overline\lambda_S}}.
        \end{equation*}
        Therefore,
        \begin{align*}
            \min\left\{\Delta_k,\frac{\|\boldsymbol{P}_k\nabla\widehat m_k(\boldsymbol{0}_p)\|}{\|\boldsymbol{P}_k\nabla^2\widehat m_k(\boldsymbol{0}_p)\boldsymbol{P}_k\|+1}\right\}
            &\ge
            \min\left\{1,
            \frac{1}{\mu\sqrt{\overline\lambda_S}(\kappa_H\slash\underline\lambda_S+1)}\right\}\Delta_k.
        \end{align*}
        By Assumption~\ref{ass:CauchyDec} and the definition of
        $\kappa_{\mathrm{cd}}$,
        \begin{equation}\label{ineq:successful-predicted-decrease}
            \widehat m_k(\boldsymbol{0}_p)
            -\widehat m_k(\widehat{\boldsymbol{s}}_k)
            \ge\kappa_{\mathrm{cd}}\Delta_k\left\|\boldsymbol{P}_k\nabla\widehat m_k(\boldsymbol{0}_p)\right\|.
        \end{equation}

        Assumption~\ref{ass:fullylinear} gives
        \begin{align*}
            \sqrt{\Xi_k}\|\nabla f(\boldsymbol{x}_k)\|
            &=\left\|\boldsymbol{P}_k\boldsymbol{Q}_k^\top
              \nabla f(\boldsymbol{x}_k)\right\|\\
            &\le \left\|\boldsymbol{P}_k\nabla\widehat m_k(\boldsymbol{0}_p)\right\|+\|\boldsymbol{P}_k\|
              \left\|\boldsymbol{Q}_k^\top\nabla f(\boldsymbol{x}_k)
              -\nabla\widehat m_k(\boldsymbol{0}_p)\right\|\\
            &\le \left\|\boldsymbol{P}_k\nabla\widehat m_k(\boldsymbol{0}_p)\right\|+\kappa_{eg}\|\boldsymbol{P}_k\|
              \widehat r_k^{\mathrm{out}}\\
            &\le\left(1+\frac{\kappa_{eg}\mu}
              {\sqrt{\underline\lambda_S}}\right)\left\|\boldsymbol{P}_k\nabla\widehat m_k(\boldsymbol{0}_p)\right\|,
        \end{align*}
        where the last inequality uses
        $\widehat r_k^{\mathrm{out}}\le\mu\|\boldsymbol{P}_k\nabla\widehat m_k(\boldsymbol{0}_p)\|$ and $\|\boldsymbol{P}_k\|\le1/\sqrt{\underline\lambda_S}$.
        Combining this bound with Inequalities \eqref{eq:accepted-decrease} and \eqref{ineq:successful-predicted-decrease} proves the result.
    \end{proof}

    A small trust region improves model accuracy, but it can also limit the decrease available from a step.  The next lemma shows that the algorithm can increase its size if the full gradient norm is above the target tolerance and the subspace is sufficiently aligned.  In this case, a sufficiently small $\Delta_k$ ensures that the iteration passes the criticality test and is very successful.  

    \begin{lemma}\label{lem:small-Delta-verysuccess}
        Suppose that Assumptions~\ref{ass:smoothness-bdbelow} to \ref{ass:CauchyDec}
        hold, $\|\nabla f(\boldsymbol{x}_k)\|>\epsilon$, and
        $\Xi_k\ge\overline\tau_\epsilon$.  Define
        \begin{equation}\label{eq:recovery-scale}
            \Delta_*(\epsilon)
            :=\min\left\{
              \frac{\mu\underline\lambda_S}
              {2(\sqrt{\underline\lambda_S}+\mu\kappa_{eg})},
              \frac{(1-\eta_2)\kappa_{\mathrm{cd}}
              \underline\lambda_S}{4\kappa_{ef}}
              \right\}
              \sqrt{\overline\tau_\epsilon}\,\epsilon.
        \end{equation}
        If $\Delta_k\le\Delta_*(\epsilon)$, then iteration $k$ passes the criticality test
        and $\rho_k\ge\eta_2$.  In particular, $k\in\mathcal V_N$.
    \end{lemma}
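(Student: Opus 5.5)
The proof has two parts: first show that iteration $k$ passes the criticality test, then show that $\rho_k\ge\eta_2$.

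\textbf{Criticality test.} I will bound $\|\boldsymbol P_k\nabla\widehat m_k(\boldsymbol 0_p)\|$ from below using the same chain as in the proof of Lemma~\ref{lem:weighted-successful-decrease}, run in the opposite direction. By Definition~\ref{def:ellipsubalignment}, $\|\boldsymbol P_k\boldsymbol Q_k^\top\nabla f(\boldsymbol x_k)\|=\sqrt{\Xi_k}\|\nabla f(\boldsymbol x_k)\|$. Assumption~\ref{ass:fullylinear} then gives
\begin{equation*}
\|\boldsymbol P_k\nabla\widehat m_k(\boldsymbol 0_p)\|\ge\sqrt{\Xi_k}\|\nabla f(\boldsymbol x_k)\|-\kappa_{eg}\|\boldsymbol P_k\|^2\Delta_k.
\end{equation*}
Here I used $\widehat r_k^{\mathrm{out}}=\Delta_k\|\boldsymbol P_k\|$ and that the model is accurate on the ball of radius $\widehat r_k^{\mathrm{out}}$. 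With $\|\boldsymbol P_k\|\le 1/\sqrt{\underline\lambda_S}$ and the hypotheses, the right side is at least $\sqrt{\overline\tau_\epsilon}\,\epsilon-\kappa_{eg}\Delta_k/\underline\lambda_S$.

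The test is passed if this quantity is at least $\widehat r_k^{\mathrm{out}}/\mu$, and $\widehat r_k^{\mathrm{out}}/\mu\le\Delta_k/(\mu\sqrt{\underline\lambda_S})$. So it suffices that
\begin{equation*}
\Delta_k\left(\frac{1}{\mu\sqrt{\underline\lambda_S}}+\frac{\kappa_{eg}}{\underline\lambda_S}\right)\le\sqrt{\overline\tau_\epsilon}\,\epsilon .
\end{equation*}
The first term in $\Delta_*(\epsilon)$ gives exactly half of this, with room to spare. In fact I will record the stronger consequence $\|\boldsymbol P_k\nabla\widehat m_k(\boldsymbol 0_p)\|\ge\tfrac12\sqrt{\overline\tau_\epsilon}\,\epsilon$, which follows because $\kappa_{eg}\Delta_k/\underline\lambda_S\le\tfrac12\sqrt{\overline\tau_\epsilon}\,\epsilon$ under the first term of the minimum. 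This lower bound is reused in the second part.

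\textbf{Ratio test.} Once the test is passed, the step is computed. I will use the standard estimate
\begin{equation*}
|\rho_k-1|\le\frac{|f(\boldsymbol x_k)-\widehat m_k(\boldsymbol 0_p)|+|f(\boldsymbol x_k+\boldsymbol s_k)-\widehat m_k(\widehat{\boldsymbol s}_k)|}{\widehat m_k(\boldsymbol 0_p)-\widehat m_k(\widehat{\boldsymbol s}_k)}.
\end{equation*}
Since $\widehat{\boldsymbol s}_k$ lies in the ellipsoidal section, $\|\widehat{\boldsymbol s}_k\|\le\widehat r_k^{\mathrm{out}}$. Full linearity then bounds the numerator by
\begin{equation*}
2\kappa_{ef}(\widehat r_k^{\mathrm{out}})^2\le\frac{2\kappa_{ef}\Delta_k^2}{\underline\lambda_S}.
\end{equation*}
For the denominator, Inequality~\eqref{ineq:successful-predicted-decrease} holds for any iteration passing the criticality test, since its derivation used only that fact and Assumption~\ref{ass:CauchyDec}. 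It gives
\begin{equation*}
\widehat m_k(\boldsymbol 0_p)-\widehat m_k(\widehat{\boldsymbol s}_k)\ge\kappa_{\mathrm{cd}}\Delta_k\cdot\tfrac12\sqrt{\overline\tau_\epsilon}\,\epsilon.
\end{equation*}
Combining the two bounds,
\begin{equation*}
|\rho_k-1|\le\frac{4\kappa_{ef}\Delta_k}{\kappa_{\mathrm{cd}}\underline\lambda_S\sqrt{\overline\tau_\epsilon}\,\epsilon}\le 1-\eta_2,
\end{equation*}
where the last inequality uses the second term in $\Delta_*(\epsilon)$. Hence $\rho_k\ge\eta_2$ and $k\in\mathcal V_N$.

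\textbf{Main obstacle.} The delicate step is getting a lower bound on the model gradient that does not depend on $\Delta_k$. The criticality test alone only gives a bound proportional to $\Delta_k$, which would make the final ratio estimate independent of $\Delta_k$ and useless. The factor $2$ in the first term of $\Delta_*(\epsilon)$ is exactly what yields the half-gradient bound $\tfrac12\sqrt{\overline\tau_\epsilon}\,\epsilon$, so the constants need careful checking there.
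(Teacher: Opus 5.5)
Your proof is correct and follows essentially the same route as the paper's. You use full linearity and the first term of $\Delta_*(\epsilon)$ to get the half-gradient bound $\|\boldsymbol P_k\nabla\widehat m_k(\boldsymbol 0_p)\|\ge\tfrac12\sqrt{\overline\tau_\epsilon}\,\epsilon$; that bound, the transformed Cauchy decrease, and the second term then give $|1-\rho_k|\le1-\eta_2$. The only cosmetic difference is that the paper carries $h_k:=\|\boldsymbol P_k\boldsymbol Q_k^\top\nabla f(\boldsymbol x_k)\|$ through the estimates and bounds it below by $\sqrt{\overline\tau_\epsilon}\,\epsilon$ only when invoking $\Delta_*(\epsilon)$, whereas you make that substitution at the start.
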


    \begin{proof}
        Suppose that $\Delta_k\le\Delta_*(\epsilon)$ and denote $h_k:=\|\boldsymbol{P}_k\boldsymbol{Q}_k^\top\nabla f(\boldsymbol{x}_k)\|$.  Then, $h_k=\sqrt{\Xi_k}\|\nabla f(\boldsymbol{x}_k)\|\ge\sqrt{\overline\tau_\epsilon}\epsilon$.  The $\boldsymbol{Q}_k$-fully linear bounds and Inequality \eqref{ineq:shape-bounds-Pk} give
        \begin{equation*}
            \left\|\boldsymbol{P}_k\nabla\widehat m_k(\boldsymbol{0}_p)
            -\boldsymbol{P}_k\boldsymbol{Q}_k^\top
            \nabla f(\boldsymbol{x}_k)\right\|\le\kappa_{eg}\|\boldsymbol{P}_k\|\widehat r_k^{\mathrm{out}} = \kappa_{eg}\|\boldsymbol{P}_k\|^2\Delta_k \le \frac{\kappa_{eg}}{\underline\lambda_S}\Delta_k.
        \end{equation*}
        By definition of $\Delta_*(\epsilon)$ and $h_k\ge\sqrt{\overline\tau_\epsilon}\epsilon$, we have $\Delta_k
        \le \frac{\mu\underline\lambda_S}{2(\sqrt{\underline\lambda_S}+\mu\kappa_{eg})}h_k$ and so
        \begin{equation*}
            \left\|\boldsymbol{P}_k\nabla\widehat m_k(\boldsymbol{0}_p)-\boldsymbol{P}_k\boldsymbol{Q}_k^\top
            \nabla f(\boldsymbol{x}_k)
            \right\|\le
            \frac{\mu\kappa_{eg}}{2(\sqrt{\underline\lambda_S}+\mu\kappa_{eg})}h_k <
            \frac{h_k}{2}.
        \end{equation*}
        This gives $\|\boldsymbol{P}_k\nabla\widehat m_k(\boldsymbol{0}_p)\|>h_k\slash 2$.  By Inequality \eqref{ineq:shape-bounds-Pk},
        \begin{equation*}
            \widehat r_k^{\mathrm{out}}
            =\Delta_k\|\boldsymbol{P}_k\|\le
            \frac{\Delta_k}{\sqrt{\underline\lambda_S}} \le
            \frac{\mu\sqrt{\underline\lambda_S}}
            {2(\sqrt{\underline\lambda_S}+\mu\kappa_{eg})}h_k \le
            \frac{\mu h_k}{2} <
            \mu\left\|
            \boldsymbol{P}_k\nabla\widehat m_k(\boldsymbol{0}_p)
            \right\|.
        \end{equation*}
        Thus, iteration $k$ passes the criticality test.  The argument leading to Inequality \eqref{ineq:successful-predicted-decrease} therefore gives
        \begin{equation*}
            \widehat m_k(\boldsymbol{0}_p)
            -\widehat m_k(\widehat{\boldsymbol{s}}_k)\ge
            \kappa_{\mathrm{cd}}\Delta_k
            \left\|
            \boldsymbol{P}_k\nabla\widehat m_k(\boldsymbol{0}_p)
            \right\|\ge
            \frac{\kappa_{\mathrm{cd}}}{2}\Delta_k h_k.
        \end{equation*}
        Therefore,
        \begin{align*}
            |1-\rho_k|
            &= \frac{| f(\boldsymbol{x}_k)-f(\boldsymbol{x}_k+\boldsymbol{s}_k)-\left(\widehat m_k(\boldsymbol{0}_p)-\widehat m_k(\widehat{\boldsymbol{s}}_k)\right)|}{\widehat m_k(\boldsymbol{0}_p)-\widehat m_k(\widehat{\boldsymbol{s}}_k)}\\
            &\le\frac{| f(\boldsymbol{x}_k)-\widehat m_k(\boldsymbol{0}_p) |+|f(\boldsymbol{x}_k+\boldsymbol{s}_k)-\widehat m_k(\widehat{\boldsymbol{s}}_k)|}{\widehat m_k(\boldsymbol{0}_p)-\widehat m_k(\widehat{\boldsymbol{s}}_k)}\\
            &\le \frac{2\kappa_{ef}(\widehat r_k^{\mathrm{out}})^2}{\kappa_{\mathrm{cd}}\Delta_kh_k\slash 2} = \frac{2\kappa_{ef}\Delta_k^2\|\boldsymbol{P}_k\|^2}{\kappa_{\mathrm{cd}}\Delta_kh_k\slash 2} \le \frac{2\kappa_{ef}\Delta_k^2\slash\underline\lambda_S}{\kappa_{\mathrm{cd}}\Delta_kh_k\slash 2} = \frac{4\kappa_{ef}\Delta_k}{\kappa_{\mathrm{cd}}\underline\lambda_S h_k}.
        \end{align*}
        By definition of $\Delta_*(\epsilon)$,
        \begin{equation*}
            \Delta_k
            \le
            \frac{(1-\eta_2)\kappa_{\mathrm{cd}}
            \underline\lambda_S}{4\kappa_{ef}}
            \sqrt{\overline\tau_\epsilon}\,\epsilon
            \le
            \frac{(1-\eta_2)\kappa_{\mathrm{cd}}
            \underline\lambda_S}{4\kappa_{ef}}h_k.
        \end{equation*}
        It follows that $|1-\rho_k|\le1-\eta_2$ and therefore  $\rho_k\ge\eta_2$.  Since iteration $k$ passes the criticality test, this proves that $k\in\mathcal V_N$.
    \end{proof}

\subsection{Counting iterations}
    We now count different types of iterations through changes in $\Delta_k$.  Repeated contractions above a fixed threshold require compensating expansions, while repeated expansions below a threshold require contractions that bring the size parameter back down.  The following lemma states both relations.  
    \begin{lemma}\label{lem:scalar-level-counting}
        Let $\Delta>0$ satisfy $\gamma_{\mathrm{inc}}^{\mathrm{ful}}\Delta\le\Delta_0$ and $\gamma_{\mathrm{inc}}^{\mathrm{ful}}\Delta\le\mathrm{radi}_{\max}\sqrt{\underline\lambda_S}$.  Then, we have
        \begin{align*}
            \left|\mathcal D_N\left(\frac{\gamma_{\mathrm{inc}}^{\mathrm{ful}}\Delta}{\gamma_{\mathrm{dec}}^{\mathrm{ful}}}\right)\setminus\mathcal S_N\right|&\le C_1\left|\mathcal D_N(\Delta)\cap\mathcal V_N\right|+\frac{\log(\Delta_0/(\gamma_{\mathrm{inc}}^{\mathrm{ful}}\Delta))}{\log(1/\gamma_{\mathrm{dec}}^{\mathrm{ful}})},\\
            \left|\mathcal D_N(\Delta)^\complement\cap\mathcal V_N\right|&\le C_1^{-1}\left|\mathcal D_N\left(\frac{\gamma_{\mathrm{inc}}^{\mathrm{ful}}\Delta}{\gamma_{\mathrm{dec}}^{\mathrm{ful}}}\right)^\complement\setminus\mathcal V_N\right|,
        \end{align*}
        where
        \begin{equation*}
            C_1:=\frac{\log(\gamma_{\mathrm{inc}}^{\mathrm{ful}})}{\log(1/\gamma_{\mathrm{dec}}^{\mathrm{ful}})}.
        \end{equation*}
    \end{lemma}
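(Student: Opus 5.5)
The plan is a potential-function argument on $\log\Delta_k$, truncated at a suitable level, with a separate potential for each inequality. Throughout, write $a:=\log(1/\gamma_{\mathrm{dec}}^{\mathrm{ful}})>0$ and $b:=\log(\gamma_{\mathrm{inc}}^{\mathrm{ful}})>0$, so $C_1=b/a$. The argument uses only the update facts recorded before Lemma~\ref{lem:weighted-successful-decrease}:
\begin{itemize}
\item $\Delta_{k+1}=\gamma_{\mathrm{dec}}^{\mathrm{ful}}\Delta_k$ for $k\notin\mathcal S_N$;
\item $\Delta_{k+1}=\Delta_k$ for $k\in\mathcal S_N\setminus\mathcal V_N$;
\item $\Delta_{k+1}\le\gamma_{\mathrm{inc}}^{\mathrm{ful}}\Delta_k$ for $k\in\mathcal V_N$, with equality when $\gamma_{\mathrm{inc}}^{\mathrm{ful}}\Delta_k\le\mathrm{radi}_{\max}\sqrt{\underline\lambda_S}$.
\end{itemize}
I will also use the lower bound from Equation~\eqref{eq:outer-cap}. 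Since clipping gives $\lambda_{\min}(\boldsymbol S_{k+1})\ge\underline\lambda_S$, for $k\in\mathcal V_N$ we have $\Delta_{k+1}\ge\min\{\gamma_{\mathrm{inc}}^{\mathrm{ful}}\Delta_k,\mathrm{radi}_{\max}\sqrt{\underline\lambda_S}\}$.

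For the first inequality, set $\psi_k:=\max\{\log\Delta_k,\log(\gamma_{\mathrm{inc}}^{\mathrm{ful}}\Delta)\}$ and check each iteration type.
\begin{itemize}
\item If $k\notin\mathcal S_N$ and $\Delta_k\ge\gamma_{\mathrm{inc}}^{\mathrm{ful}}\Delta/\gamma_{\mathrm{dec}}^{\mathrm{ful}}$, then $\Delta_{k+1}=\gamma_{\mathrm{dec}}^{\mathrm{ful}}\Delta_k\ge\gamma_{\mathrm{inc}}^{\mathrm{ful}}\Delta$. Hence $\psi$ drops by exactly $a$.
\item Any other non-successful iteration decreases $\Delta$, so $\psi$ does not increase.
\item Keep iterations leave $\psi$ unchanged.
\item If $k\in\mathcal V_N$ and $\Delta_k<\Delta$, then $\Delta_{k+1}\le\gamma_{\mathrm{inc}}^{\mathrm{ful}}\Delta_k<\gamma_{\mathrm{inc}}^{\mathrm{ful}}\Delta$, so $\psi$ stays at its floor.
\item If $k\in\mathcal V_N\cap\mathcal D_N(\Delta)$, then $\psi$ rises by at most $b$, because $\log\Delta_{k+1}\le b+\log\Delta_k\le b+\psi_k$.
\end{itemize}
Telescoping over $k=0,\dots,N-1$ gives $\psi_N-\psi_0\le b|\mathcal D_N(\Delta)\cap\mathcal V_N|-a|\mathcal D_N(\gamma_{\mathrm{inc}}^{\mathrm{ful}}\Delta/\gamma_{\mathrm{dec}}^{\mathrm{ful}})\setminus\mathcal S_N|$. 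We have $\psi_0=\log\Delta_0$, using the hypothesis $\gamma_{\mathrm{inc}}^{\mathrm{ful}}\Delta\le\Delta_0$. We also have $\psi_N\ge\log(\gamma_{\mathrm{inc}}^{\mathrm{ful}}\Delta)$. Rearranging and dividing by $a$ yields the first bound.

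For the second inequality, use the truncation from above, $\phi_k:=\min\{\log\Delta_k,\log(\gamma_{\mathrm{inc}}^{\mathrm{ful}}\Delta)\}$, and again check each iteration type.
\begin{itemize}
\item If $k\in\mathcal V_N$ and $\Delta_k<\Delta$, then $\gamma_{\mathrm{inc}}^{\mathrm{ful}}\Delta_k<\gamma_{\mathrm{inc}}^{\mathrm{ful}}\Delta\le\mathrm{radi}_{\max}\sqrt{\underline\lambda_S}$, so the expansion is exact. Thus $\Delta_{k+1}=\gamma_{\mathrm{inc}}^{\mathrm{ful}}\Delta_k<\gamma_{\mathrm{inc}}^{\mathrm{ful}}\Delta$, and $\phi$ rises by exactly $b$.
\item If $k\in\mathcal V_N$ and $\Delta_k\ge\Delta$, the cap lower bound gives $\Delta_{k+1}\ge\min\{\gamma_{\mathrm{inc}}^{\mathrm{ful}}\Delta,\mathrm{radi}_{\max}\sqrt{\underline\lambda_S}\}=\gamma_{\mathrm{inc}}^{\mathrm{ful}}\Delta$. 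So $\phi$ does not decrease.
\item Keep iterations leave $\phi$ unchanged.
\item If $k\notin\mathcal S_N$ and $\Delta_k\ge\gamma_{\mathrm{inc}}^{\mathrm{ful}}\Delta/\gamma_{\mathrm{dec}}^{\mathrm{ful}}$, then $\Delta_{k+1}\ge\gamma_{\mathrm{inc}}^{\mathrm{ful}}\Delta$. So $\phi$ is unchanged, since it sits at its cap both before and after.
\item The only remaining decreases come from non-successful iterations in $\mathcal D_N(\gamma_{\mathrm{inc}}^{\mathrm{ful}}\Delta/\gamma_{\mathrm{dec}}^{\mathrm{ful}})^\complement$, each lowering $\phi$ by at most $a$. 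These iterations lie in $\mathcal D_N(\gamma_{\mathrm{inc}}^{\mathrm{ful}}\Delta/\gamma_{\mathrm{dec}}^{\mathrm{ful}})^\complement\setminus\mathcal V_N$.
\end{itemize}
Because $\Delta_0\ge\gamma_{\mathrm{inc}}^{\mathrm{ful}}\Delta$, we have $\phi_0=\log(\gamma_{\mathrm{inc}}^{\mathrm{ful}}\Delta)\ge\phi_N$. Telescoping gives $0\ge\phi_N-\phi_0\ge b|\mathcal D_N(\Delta)^\complement\cap\mathcal V_N|-a|\mathcal D_N(\gamma_{\mathrm{inc}}^{\mathrm{ful}}\Delta/\gamma_{\mathrm{dec}}^{\mathrm{ful}})^\complement\setminus\mathcal V_N|$, which is the second bound.

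The main obstacle is the cap in Equation~\eqref{eq:outer-cap}: a very successful iteration can in principle decrease $\Delta_k$ because $\lambda_{\min}(\boldsymbol S_{k+1})$ may shrink. This is why the truncation levels are chosen as above. The potential $\psi$ needs only the upper bound $\Delta_{k+1}\le\gamma_{\mathrm{inc}}^{\mathrm{ful}}\Delta_k$. The potential $\phi$ needs only the lower bound via $\underline\lambda_S$, together with the hypothesis $\gamma_{\mathrm{inc}}^{\mathrm{ful}}\Delta\le\mathrm{radi}_{\max}\sqrt{\underline\lambda_S}$.
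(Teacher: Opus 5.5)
Your proof is correct and follows essentially the same route as the paper. Your potentials $\psi_k$ and $\phi_k$ are, up to an additive constant (and a sign change for $\phi_k$), the paper's truncated quantities $\max\{0,\log(\Delta_k/(\gamma_{\mathrm{inc}}^{\mathrm{ful}}\Delta))\}$ and $\max\{0,\log(\gamma_{\mathrm{inc}}^{\mathrm{ful}}\Delta/\Delta_k)\}$, and you use the same per-iteration case analysis (including the lower bound from the cap in Equation~\eqref{eq:outer-cap} via $\underline\lambda_S$) and the same telescoping.
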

    \begin{proof}
        Consider the change
        \begin{equation}\label{quant:maxlogk-maxlogk+1}
            \max\left\{0,\log\left(\frac{\Delta_{k+1}}{\gamma_{\mathrm{inc}}^{\mathrm{ful}}\Delta}\right)\right\}
            -\max\left\{0,\log\left(\frac{\Delta_k}{\gamma_{\mathrm{inc}}^{\mathrm{ful}}\Delta}\right)\right\}.
        \end{equation}
        If $k\notin\mathcal S_N$, then $\Delta_{k+1}=\gamma_{\mathrm{dec}}^{\mathrm{ful}}\Delta_k$, so Quantity~\eqref{quant:maxlogk-maxlogk+1} is nonpositive.  If additionally $\Delta_k\ge\gamma_{\mathrm{inc}}^{\mathrm{ful}}\Delta/\gamma_{\mathrm{dec}}^{\mathrm{ful}}$, then $\Delta_{k+1}\ge\gamma_{\mathrm{inc}}^{\mathrm{ful}}\Delta$, so both logarithms are nonnegative and Quantity~\eqref{quant:maxlogk-maxlogk+1} equals $-\log(1/\gamma_{\mathrm{dec}}^{\mathrm{ful}})$.  If $k\in\mathcal S_N\setminus\mathcal V_N$, then $\Delta_{k+1}=\Delta_k$, so the quantity is zero.  Thus, it is nonpositive whenever $k\notin\mathcal V_N$.
    
        Moreover, if $k\in\mathcal D_N(\Delta)^\complement\cap\mathcal V_N$, then $\Delta_{k+1}\le\gamma_{\mathrm{inc}}^{\mathrm{ful}}\Delta_k<\gamma_{\mathrm{inc}}^{\mathrm{ful}}\Delta$, so Quantity~\eqref{quant:maxlogk-maxlogk+1} is also zero.  Consequently, it can be positive only for $k\in\mathcal D_N(\Delta)\cap\mathcal V_N$.  On any such iteration, $\Delta_{k+1}\le\gamma_{\mathrm{inc}}^{\mathrm{ful}}\Delta_k$.  Notice that the function $t\mapsto\max\{0,t\}$ is nondecreasing and satisfies $\max\{0,t+c\}-\max\{0,t\}\le c$ for all $t\in\mathbb R$ and $c\ge0$.  Applying these properties with $t=\log(\Delta_k\slash(\gamma_{\mathrm{inc}}^{\mathrm{ful}}\Delta))$ and $c=\log(\gamma_{\mathrm{inc}}^{\mathrm{ful}})$ shows that Quantity~\eqref{quant:maxlogk-maxlogk+1} is at most $\log(\gamma_{\mathrm{inc}}^{\mathrm{ful}})$.
    
        Summing Quantity~\eqref{quant:maxlogk-maxlogk+1} over $k=0,\ldots,N-1$, using $\gamma_{\mathrm{inc}}^{\mathrm{ful}}\Delta\le\Delta_0$, we get
        \begin{align*}
            0&\le
            \max\left\{0,\log\left(\frac{\Delta_N}
            {\gamma_{\mathrm{inc}}^{\mathrm{ful}}\Delta}\right)\right\}\\
            &\le
            \log\left(\frac{\Delta_0}{\gamma_{\mathrm{inc}}^{\mathrm{ful}}\Delta}\right)
            -\log(1/\gamma_{\mathrm{dec}}^{\mathrm{ful}})
            \left|\mathcal D_N\left(
            \frac{\gamma_{\mathrm{inc}}^{\mathrm{ful}}\Delta}
            {\gamma_{\mathrm{dec}}^{\mathrm{ful}}}
            \right)\setminus\mathcal S_N\right|
            +\log(\gamma_{\mathrm{inc}}^{\mathrm{ful}})
            |\mathcal D_N(\Delta)\cap\mathcal V_N|.
        \end{align*}
        Rearranging proves the first inequality.
    
        For the second inequality, we apply the same summation argument to
        \begin{equation}\label{quant:reverse-maxlogk-maxlogk+1}
            \max\left\{0,\log\left(\frac{\gamma_{\mathrm{inc}}^{\mathrm{ful}}\Delta}{\Delta_{k+1}}\right)\right\}
            -\max\left\{0,\log\left(\frac{\gamma_{\mathrm{inc}}^{\mathrm{ful}}\Delta}{\Delta_k}\right)\right\}.
        \end{equation}
        By Inequality~\eqref{ineq:shape-bounds-Sk}, we have $\mathrm{radi}_{\max}\sqrt{\lambda_{\min}(\boldsymbol S_{k+1})}\ge\mathrm{radi}_{\max}\sqrt{\underline\lambda_S}\ge\gamma_{\mathrm{inc}}^{\mathrm{ful}}\Delta$.  Hence, if $k\in\mathcal D_N(\Delta)^\complement\cap\mathcal V_N$, Equation~\eqref{eq:outer-cap} gives $\Delta_{k+1}=\gamma_{\mathrm{inc}}^{\mathrm{ful}}\Delta_k<\gamma_{\mathrm{inc}}^{\mathrm{ful}}\Delta$.  Both logarithms are then positive, so Quantity~\eqref{quant:reverse-maxlogk-maxlogk+1} equals $-\log(\gamma_{\mathrm{inc}}^{\mathrm{ful}})$.  If instead $k\in\mathcal D_N(\Delta)\cap\mathcal V_N$, both entries in the minimum in Equation~\eqref{eq:outer-cap} are at least $\gamma_{\mathrm{inc}}^{\mathrm{ful}}\Delta$, so the quantity is nonpositive.
    
        For every $k\notin\mathcal V_N$, the contraction and keep updates give $\gamma_{\mathrm{dec}}^{\mathrm{ful}}\Delta_k\le\Delta_{k+1}\le\Delta_k$.  The same properties of $t\mapsto\max\{0,t\}$ therefore bound Quantity~\eqref{quant:reverse-maxlogk-maxlogk+1} by $\log(1/\gamma_{\mathrm{dec}}^{\mathrm{ful}})$.  Moreover, if $\Delta_k\ge\gamma_{\mathrm{inc}}^{\mathrm{ful}}\Delta/\gamma_{\mathrm{dec}}^{\mathrm{ful}}$, then $\Delta_{k+1}\ge\gamma_{\mathrm{inc}}^{\mathrm{ful}}\Delta$, so both terms are zero.  Thus, the quantity can be positive only for $k\in\mathcal D_N(\gamma_{\mathrm{inc}}^{\mathrm{ful}}\Delta/\gamma_{\mathrm{dec}}^{\mathrm{ful}})^\complement\setminus\mathcal V_N$.
    
        Since $\gamma_{\mathrm{inc}}^{\mathrm{ful}}\Delta\le\Delta_0$, the initial maximum in this summation is zero.  Summing Quantity~\eqref{quant:reverse-maxlogk-maxlogk+1} therefore gives
        \begin{align*}
            0&\le
            \max\left\{0,\log\left(
            \frac{\gamma_{\mathrm{inc}}^{\mathrm{ful}}\Delta}{\Delta_N}
            \right)\right\}\\
            &\le
            -\log(\gamma_{\mathrm{inc}}^{\mathrm{ful}})
            |\mathcal D_N(\Delta)^\complement\cap\mathcal V_N|
            +\log(1/\gamma_{\mathrm{dec}}^{\mathrm{ful}})
            \left|\mathcal D_N\left(
            \frac{\gamma_{\mathrm{inc}}^{\mathrm{ful}}\Delta}
            {\gamma_{\mathrm{dec}}^{\mathrm{ful}}}
            \right)^\complement\setminus\mathcal V_N\right|.
        \end{align*}
        Rearranging proves the second inequality.
    \end{proof}

    To combine this count with Lemma~\ref{lem:small-Delta-verysuccess}, we choose a threshold that satisfies the conditions of Lemma~\ref{lem:scalar-level-counting}, while also lying below the threshold from Lemma~\ref{lem:small-Delta-verysuccess}.  In particular, for $\epsilon>0$, consider
    \begin{equation}\label{eq:zero-analysis-scale}
        \Delta_\epsilon:=\min\left\{
        \frac{\Delta_0}{\gamma_{\mathrm{inc}}^{\mathrm{ful}}},
        \frac{\gamma_{\mathrm{dec}}^{\mathrm{ful}}}{\gamma_{\mathrm{inc}}^{\mathrm{ful}}}\Delta_*(\epsilon),
        \frac{\mathrm{radi}_{\max}\sqrt{\underline\lambda_S}}{\gamma_{\mathrm{inc}}^{\mathrm{ful}}}
        \right\}.
    \end{equation}
    Thus $\gamma_{\mathrm{inc}}^{\mathrm{ful}}\Delta_\epsilon\le\Delta_0$ and $\gamma_{\mathrm{inc}}^{\mathrm{ful}}\Delta_\epsilon\le\mathrm{radi}_{\max}\sqrt{\underline\lambda_S}$, so Lemma~\ref{lem:scalar-level-counting} applies with $\Delta=\Delta_\epsilon$.  Moreover, we have $\gamma_{\mathrm{inc}}^{\mathrm{ful}}\Delta_\epsilon/\gamma_{\mathrm{dec}}^{\mathrm{ful}}\le\Delta_*(\epsilon)$.

    The next lemma bounds the number of sufficiently aligned iterations that can occur while all gradient norms remain above $\epsilon$.  Successful aligned iterations with $\Delta_k\ge\Delta_\epsilon$ are limited by the available decrease in $f$.  Aligned iterations with smaller $\Delta_k$ are very successful, so the update counts control how often they can occur.  Combining these observations gives a bound consisting of a term depending on $\epsilon$ and a fixed fraction of the total number of iterations.
    \begingroup
    \setlength{\emergencystretch}{1em}
    \begin{lemma}\label{lem:zero-aligned-count}
        Suppose that Assumptions~\ref{ass:smoothness-bdbelow} to \ref{ass:CauchyDec} hold and that $\|\nabla f(\boldsymbol{x}_k)\|>\epsilon$ for every $k\in\mathcal K_N$.  Then,
        \begin{equation*}
            \left|\mathcal A_N\right|\le \phi_\epsilon+\frac{1}{C_\gamma+1}\frac{\log(\Delta_0/(\gamma_{\mathrm{inc}}^{\mathrm{ful}}\Delta_\epsilon))}{\log(1/\gamma_{\mathrm{dec}}^{\mathrm{ful}})}+\frac{C_\gamma}{C_\gamma+1}N,
        \end{equation*}
        where $C_\gamma:=\max\{C_1,C_1^{-1}\}$ and
        \begin{equation*}
            \phi_\epsilon:=\frac{(f(\boldsymbol{x}_0)-f_{\mathrm{low}})(1+\kappa_{eg}\mu\slash\sqrt{\underline\lambda_S})}{\eta_1\kappa_{\mathrm{cd}}\Delta_\epsilon\sqrt{\overline\tau_\epsilon}\,\epsilon}.
        \end{equation*}
    \end{lemma}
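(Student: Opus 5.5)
The plan is to split the aligned iterations $\mathcal A_N$ according to the size of $\Delta_k$ and the outcome of the iteration. Each piece is then bounded either by the total available decrease in $f$ or by one of the two counting inequalities of Lemma~\ref{lem:scalar-level-counting} with $\Delta=\Delta_\epsilon$. Write $\overline\Delta_\epsilon:=\gamma_{\mathrm{inc}}^{\mathrm{ful}}\Delta_\epsilon/\gamma_{\mathrm{dec}}^{\mathrm{ful}}$. The following four cardinalities will be used:
\begin{equation*}
a:=|\mathcal D_N(\Delta_\epsilon)\cap\mathcal V_N|,\quad
b:=|\mathcal D_N(\overline\Delta_\epsilon)\setminus\mathcal S_N|,\quad
c:=|\mathcal D_N(\Delta_\epsilon)^\complement\cap\mathcal V_N|,\quad
d:=|\mathcal D_N(\overline\Delta_\epsilon)^\complement\setminus\mathcal V_N|.
\end{equation*}

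\emph{Covering $\mathcal A_N$ by three sets.} Take $k\in\mathcal A_N$.
\begin{itemize}
\item If $k\in\mathcal S_N$ and $\Delta_k\ge\Delta_\epsilon$, then Lemma~\ref{lem:weighted-successful-decrease} together with $\Xi_k\ge\overline\tau_\epsilon$ and $\|\nabla f(\boldsymbol x_k)\|>\epsilon$ gives a decrease of at least
\begin{equation*}
\frac{\eta_1\kappa_{\mathrm{cd}}\Delta_\epsilon\sqrt{\overline\tau_\epsilon}\,\epsilon}{1+\kappa_{eg}\mu/\sqrt{\underline\lambda_S}}.
\end{equation*}
Since $f$ is nonincreasing along the iterates and bounded below by $f_{\mathrm{low}}$, there are at most $\phi_\epsilon$ such iterations.
\item If $k\notin\mathcal S_N$, then $\Delta_k>\Delta_*(\epsilon)\ge\overline\Delta_\epsilon$. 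Otherwise Lemma~\ref{lem:small-Delta-verysuccess} would force $k\in\mathcal V_N\subseteq\mathcal S_N$. Hence $k\in\mathcal D_N(\overline\Delta_\epsilon)\setminus\mathcal S_N$.
\item If $k\in\mathcal S_N$ and $\Delta_k<\Delta_\epsilon\le\Delta_*(\epsilon)$, then Lemma~\ref{lem:small-Delta-verysuccess} gives $k\in\mathcal V_N$. Hence $k\in\mathcal D_N(\Delta_\epsilon)^\complement\cap\mathcal V_N$.
\end{itemize}
Altogether $|\mathcal A_N|\le\phi_\epsilon+b+c$.

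\emph{Bounding $b+c$.} With $L:=\log(\Delta_0/(\gamma_{\mathrm{inc}}^{\mathrm{ful}}\Delta_\epsilon))/\log(1/\gamma_{\mathrm{dec}}^{\mathrm{ful}})$, Lemma~\ref{lem:scalar-level-counting} gives $b\le C_1a+L\le C_\gamma a+L$ and $c\le C_1^{-1}d\le C_\gamma d$. Hence $b+c\le C_\gamma(a+d)+L$.

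We also need a complementary bound from disjointness. The set counted by $b$ lies in $\mathcal D_N(\overline\Delta_\epsilon)$ and the set counted by $d$ lies in its complement, so these two sets are disjoint. Both avoid $\mathcal V_N$, since $\mathcal V_N\subseteq\mathcal S_N$. Meanwhile $a+c=|\mathcal V_N|$. All four sets are therefore pairwise disjoint subsets of $\mathcal K_N$, which gives $a+b+c+d\le N$.

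Setting $t:=a+d$, we get
\begin{equation*}
b+c\le\min\{N-t,\;C_\gamma t+L\}\le\max_{t\ge0}\min\{N-t,\;C_\gamma t+L\}=\frac{C_\gamma N+L}{C_\gamma+1}.
\end{equation*}
The maximum is attained where the two lines cross. Adding $\phi_\epsilon$ yields the stated bound.

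The step I expect to be the main obstacle is this last one. Successful iterations with large $\Delta_k$ that are not aligned, which are counted in $a$, cannot be controlled by the decrease argument. They must be absorbed via the disjointness bound $a+b+c+d\le N$, and it is exactly the trade-off between the two bounds for $b+c$ that produces the fraction $C_\gamma/(C_\gamma+1)$. A smaller point to check is that the thresholds line up: we need $\overline\Delta_\epsilon\le\Delta_*(\epsilon)$ and $\Delta_\epsilon\le\Delta_*(\epsilon)$, and both hold by \eqref{eq:zero-analysis-scale}.
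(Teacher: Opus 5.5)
Your proof is correct and follows the paper's argument. It uses the same three-set cover of $\mathcal A_N$, the same $\phi_\epsilon$ bound from Lemma~\ref{lem:weighted-successful-decrease}, and the same application of Lemma~\ref{lem:scalar-level-counting} to obtain $b+c\le C_\gamma(a+d)+L$.

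The only difference is the closing count. The paper uses $a+d\le\phi_\epsilon+N-|\mathcal A_N|$ and rearranges. You instead use the pairwise disjointness $a+b+c+d\le N$ and balance the two bounds on $b+c$, which gives the same constant. One small correction: $\max_{t\ge0}\min\{N-t,C_\gamma t+L\}$ equals $(C_\gamma N+L)/(C_\gamma+1)$ only when $L\le N$. The inequality you need still holds for every $t$, since $\min\{N-t,C_\gamma t+L\}\le\frac{C_\gamma}{C_\gamma+1}(N-t)+\frac{1}{C_\gamma+1}(C_\gamma t+L)=\frac{C_\gamma N+L}{C_\gamma+1}$.
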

    \endgroup

    \begin{proof}
        Since $\gamma_{\mathrm{inc}}^{\mathrm{ful}}\Delta_\epsilon/\gamma_{\mathrm{dec}}^{\mathrm{ful}}\le\Delta_*(\epsilon)$, Lemma~\ref{lem:small-Delta-verysuccess} shows that every iteration in $\mathcal{A}_N$ with $\Delta_k<\gamma_{\mathrm{inc}}^{\mathrm{ful}}\Delta_\epsilon/\gamma_{\mathrm{dec}}^{\mathrm{ful}}$ is very successful.  Also, every iteration in $\mathcal A_N\cap\mathcal S_N\cap\mathcal D_N(\Delta_\epsilon)$ gives at least the decrease in Lemma~\ref{lem:weighted-successful-decrease} with $\Delta_k\ge\Delta_\epsilon$.  Since every other iteration either decreases $f$ or leaves it unchanged, we have that
        \begin{align*}
            f(\boldsymbol{x}_0)-f_{\mathrm{low}}
            \ge f(\boldsymbol{x}_0)-f(\boldsymbol{x}_N) &\ge\sum_{k\in\mathcal A_N\cap\mathcal S_N\cap\mathcal D_N(\Delta_\epsilon)}
            \bigl(f(\boldsymbol{x}_k)-f(\boldsymbol{x}_{k+1})\bigr)\\
            &\ge
            \frac{\eta_1\kappa_{\mathrm{cd}}\Delta_\epsilon
            \sqrt{\overline\tau_\epsilon}\,\epsilon}
            {1+\kappa_{eg}\mu\slash\sqrt{\underline\lambda_S}}
            \left|\mathcal A_N\cap\mathcal S_N\cap\mathcal D_N(\Delta_\epsilon)\right|,
        \end{align*}
        and hence $|\mathcal A_N\cap\mathcal S_N\cap\mathcal D_N(\Delta_\epsilon)|\le\phi_\epsilon$.

        Now, we consider how the iterations in $\mathcal A_N$ are classified.  Let $k\in\mathcal A_N$. If $k\in\mathcal S_N$ and $\Delta_k\ge\Delta_\epsilon$, then $k\in\mathcal A_N\cap\mathcal S_N\cap\mathcal D_N(\Delta_\epsilon)$. If instead $k\in\mathcal S_N$ and $\Delta_k<\Delta_\epsilon$, then we have $\Delta_k<\Delta_\epsilon<\gamma_{\mathrm{inc}}^{\mathrm{ful}}\Delta_\epsilon/\gamma_{\mathrm{dec}}^{\mathrm{ful}}$. Lemma~\ref{lem:small-Delta-verysuccess} therefore implies that $k\in\mathcal V_N$, and hence $k\in\mathcal D_N(\Delta_\epsilon)^\complement\cap\mathcal V_N$. It remains to consider $k\in\mathcal A_N\setminus\mathcal S_N$. Such an iteration cannot satisfy $\Delta_k<\gamma_{\mathrm{inc}}^{\mathrm{ful}}\Delta_\epsilon/\gamma_{\mathrm{dec}}^{\mathrm{ful}}$, because Lemma~\ref{lem:small-Delta-verysuccess} would then imply $k\in\mathcal V_N\subseteq\mathcal S_N$, contradicting $k\notin\mathcal S_N$. Consequently, every $k\in\mathcal A_N\setminus\mathcal S_N$ satisfies $\Delta_k\ge\gamma_{\mathrm{inc}}^{\mathrm{ful}}\Delta_\epsilon/\gamma_{\mathrm{dec}}^{\mathrm{ful}}$, and therefore belongs to $\mathcal D_N(\gamma_{\mathrm{inc}}^{\mathrm{ful}}\Delta_\epsilon/\gamma_{\mathrm{dec}}^{\mathrm{ful}})\setminus\mathcal S_N$. Thus every iteration in $\mathcal A_N$ is contained in one of $\mathcal A_N\cap\mathcal S_N\cap\mathcal D_N(\Delta_\epsilon)$, $\mathcal D_N(\gamma_{\mathrm{inc}}^{\mathrm{ful}}\Delta_\epsilon/\gamma_{\mathrm{dec}}^{\mathrm{ful}})\setminus\mathcal S_N$, or $\mathcal D_N(\Delta_\epsilon)^\complement\cap\mathcal V_N$. Applying Lemma~\ref{lem:scalar-level-counting} and using $|\mathcal A_N\cap\mathcal S_N\cap\mathcal D_N(\Delta_\epsilon)|\le\phi_\epsilon$ gives 
        \begin{equation*}
            \left|\mathcal A_N\right|\le\phi_\epsilon+C_1\left|\mathcal D_N(\Delta_\epsilon)\cap\mathcal V_N\right|+C_1^{-1}\left|\mathcal D_N\left(\frac{\gamma_{\mathrm{inc}}^{\mathrm{ful}}\Delta_\epsilon}{\gamma_{\mathrm{dec}}^{\mathrm{ful}}}\right)^\complement\setminus\mathcal V_N\right|+\frac{\log(\Delta_0/(\gamma_{\mathrm{inc}}^{\mathrm{ful}}\Delta_\epsilon))}{\log(1/\gamma_{\mathrm{dec}}^{\mathrm{ful}})}.
        \end{equation*}
        Notice that the two sets multiplied by $C_1$ and $C_1^{-1}$ are disjoint.  Moreover, any iteration in $\mathcal A_N\cap\mathcal D_N(\Delta_\epsilon)\cap\mathcal V_N$ is very successful and hence successful, so it belongs to $\mathcal A_N\cap\mathcal S_N\cap\mathcal D_N(\Delta_\epsilon)$. Therefore at most $\phi_\epsilon$ iterations in $\mathcal A_N$ belong to $\mathcal D_N(\Delta_\epsilon)\cap\mathcal V_N$. On the other hand, $\mathcal D_N(\gamma_{\mathrm{inc}}^{\mathrm{ful}}\Delta_\epsilon\slash\gamma_{\mathrm{dec}}^{\mathrm{ful}})^\complement\setminus\mathcal V_N$ contains no iteration of $\mathcal A_N$.  Indeed, if $k\in\mathcal A_N$ and $\Delta_k<\gamma_{\mathrm{inc}}^{\mathrm{ful}}\Delta_\epsilon/\gamma_{\mathrm{dec}}^{\mathrm{ful}}$, then Lemma~\ref{lem:small-Delta-verysuccess} implies $k\in\mathcal V_N$. It follows that, among the iterations in the union of these two disjoint sets, at most $\phi_\epsilon$ belong to $\mathcal A_N$, while all remaining iterations lie outside $\mathcal A_N$. Hence, the cardinality of their union is at most $\phi_\epsilon+N-|\mathcal A_N|$. Since $C_\gamma=\max\{C_1,C_1^{-1}\}$, we obtain 
        \begin{equation*}
            |\mathcal A_N|\le\phi_\epsilon+C_\gamma(\phi_\epsilon+N-|\mathcal A_N|)+\frac{\log(\Delta_0/(\gamma_{\mathrm{inc}}^{\mathrm{ful}}\Delta_\epsilon))}{\log(1/\gamma_{\mathrm{dec}}^{\mathrm{ful}})}.
        \end{equation*}
        Rearranging and dividing by $C_\gamma+1$ yields the claimed result.
    \end{proof}

\subsection{Complexity and almost-sure convergence}
    Lemma~\ref{lem:zero-aligned-count} gives an upper bound on the number of sufficiently aligned iterations before a small gradient is reached.  The sampling assumption makes a much smaller than expected number of such iterations unlikely.  When the conditional alignment probability exceeds the fraction in the counting bound, these two facts limit how long the algorithm can continue without reaching the target tolerance.  The following theorem gives an exponentially decreasing bound on that probability and a bound on the expected iteration count.  If the alignment threshold $\overline\tau_\epsilon$ and the gap $\pi_\epsilon-C_\gamma/(C_\gamma+1)$ remain bounded below by positive constants as $\epsilon$ decreases, the expected iteration count is $\mathcal O(\epsilon^{-2})$.  In Subsection~\ref{subsec:subspacesampling}, we provide two sampling strategies that satisfy these requirements.
    \begin{theorem}\label{thm:zero-radius-complexity}
        Suppose that RETRO is run with $\mathrm{radi}_{\min}=0$.  Suppose that Assumptions~\ref{ass:smoothness-bdbelow} to \ref{ass:mean-xi} hold with
        \begin{equation}\label{eq:zero-probability-condition}
            \pi_\epsilon>\frac{C_\gamma}{C_\gamma+1}.
        \end{equation}
        Let $K_\epsilon:=\inf\{k\ge0:\|\nabla f(\boldsymbol{x}_k)\|\le\epsilon\}$.  For every positive integer $N$ satisfying
        \begin{equation*}
            N\ge\frac{2}{\pi_\epsilon-C_\gamma/(C_\gamma+1)}\left(\phi_\epsilon+\frac{1}{C_\gamma+1}\frac{\log(\Delta_0/(\gamma_{\mathrm{inc}}^{\mathrm{ful}}\Delta_\epsilon))}{\log(1/\gamma_{\mathrm{dec}}^{\mathrm{ful}})}\right),
        \end{equation*}
        one has 
        \begin{equation}\label{ineq:Kepstailbound}
            \mathbb P(K_\epsilon\ge N)\le\exp\left(-\frac{(\pi_\epsilon-C_\gamma/(C_\gamma+1))^2}{8\pi_\epsilon}N\right).
        \end{equation}
        Taking $N$ to be the smallest positive integer satisfying this lower bound, we also have
        \begin{equation}\label{ineq:Kepsexpectationbound}
            \mathbb E[K_\epsilon]\le N+\frac{\exp\left(-\frac{(\pi_\epsilon-C_\gamma/(C_\gamma+1))^2}{8\pi_\epsilon}N\right)}{1-\exp\left(-\frac{(\pi_\epsilon-C_\gamma/(C_\gamma+1))^2}{8\pi_\epsilon}\right)}.
        \end{equation}
        If $\overline\tau_\epsilon$ and $\pi_\epsilon-C_\gamma/(C_\gamma+1)$ are bounded below by positive constants independent of $\epsilon$, then $\mathbb E[K_\epsilon]=\mathcal O(\epsilon^{-2})$.
    \end{theorem}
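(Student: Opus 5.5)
The argument combines the deterministic counting bound of Lemma~\ref{lem:zero-aligned-count} with a martingale concentration inequality for the aligned indicators. Write $\delta:=\pi_\epsilon-C_\gamma/(C_\gamma+1)>0$ and $B_\epsilon:=\phi_\epsilon+\frac{1}{C_\gamma+1}\frac{\log(\Delta_0/(\gamma_{\mathrm{inc}}^{\mathrm{ful}}\Delta_\epsilon))}{\log(1/\gamma_{\mathrm{dec}}^{\mathrm{ful}})}$. Because $\mathrm{radi}_{\min}=0$, the algorithm never stops, so every iteration is defined. On the event $\{K_\epsilon\ge N\}$ we have $\|\nabla f(\boldsymbol x_k)\|>\epsilon$ for all $k\in\mathcal K_N$. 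Lemma~\ref{lem:zero-aligned-count} then gives
\[
\sum_{k=0}^{N-1}\mathbf 1\{\Xi_k\ge\overline\tau_\epsilon\}\le B_\epsilon+\tfrac{C_\gamma}{C_\gamma+1}N .
\]
With the hypothesis $B_\epsilon\le \delta N/2$, this event is therefore contained in
\[
\Big\{\textstyle\sum_{k<N}\mathbf 1\{\Xi_k\ge\overline\tau_\epsilon\}\le(\pi_\epsilon-\delta/2)N\Big\}\cap\{K_\epsilon\ge N\}.
\]

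To handle the dependence, define modified indicators $Z_k:=\mathbf 1\{\Xi_k\ge\overline\tau_\epsilon\}$ when $\|\nabla f(\boldsymbol x_k)\|>\epsilon$, and $Z_k:=1$ otherwise. These are $\mathcal F_k$-measurable. Since the event $\{\|\nabla f(\boldsymbol x_k)\|>\epsilon\}$ is $\mathcal F_{k-1}$-measurable, Assumption~\ref{ass:mean-xi} gives $\mathbb P(Z_k=1\mid\mathcal F_{k-1})\ge\pi_\epsilon$ in all cases. On $\{K_\epsilon\ge N\}$ the $Z_k$ coincide with the original indicators. It therefore suffices to bound $\mathbb P(\sum_{k<N}Z_k\le(\pi_\epsilon-\delta/2)N)$.

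This is the main obstacle: we need a multiplicative Chernoff-type bound for dependent Bernoulli variables whose conditional success probability is at least $\pi_\epsilon$. The plan is the standard supermartingale argument. For $\lambda>0$,
\[
\mathbb E[e^{-\lambda Z_k}\mid\mathcal F_{k-1}]\le 1-\pi_\epsilon(1-e^{-\lambda})\le\exp(-\pi_\epsilon(1-e^{-\lambda})),
\]
because $t\mapsto 1-t(1-e^{-\lambda})$ is decreasing in $t$. Iterating via the tower property yields
\[
\mathbb E[e^{-\lambda\sum Z_k}]\le\exp(-N\pi_\epsilon(1-e^{-\lambda})).
\]
Markov's inequality with the usual optimization over $\lambda$ then gives the lower-tail Chernoff bound $\exp(-\theta^2\pi_\epsilon N/2)$ with $\theta=\delta/(2\pi_\epsilon)$. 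This equals $\exp(-\delta^2N/(8\pi_\epsilon))$, which is exactly \eqref{ineq:Kepstailbound}. Equivalently, one can cite the known result that such $Z_k$ stochastically dominate i.i.d.\ Bernoulli$(\pi_\epsilon)$ variables in the lower tail, as in \cite{cartis2023scalable}.

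For the expectation, write $\mathbb E[K_\epsilon]=\sum_{j\ge1}\mathbb P(K_\epsilon\ge j)$. Bound the terms with $j\le N$ by $1$, and the terms with $j\ge N+1$ using \eqref{ineq:Kepstailbound}; this is valid since every $j\ge N$ also satisfies the lower bound. Summing the geometric series gives
\[
\sum_{j>N}e^{-cj}=e^{-cN}\frac{e^{-c}}{1-e^{-c}}\le\frac{e^{-cN}}{1-e^{-c}},
\]
with $c=\delta^2/(8\pi_\epsilon)$, which is \eqref{ineq:Kepsexpectationbound}.

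For the order statement, observe that $\Delta_\epsilon$ is the minimum of constants and a term proportional to $\sqrt{\overline\tau_\epsilon}\,\epsilon$. Hence $\Delta_\epsilon=\Theta(\epsilon)$ for small $\epsilon$, and $\phi_\epsilon=\mathcal O(1/(\Delta_\epsilon\sqrt{\overline\tau_\epsilon}\,\epsilon))=\mathcal O(\epsilon^{-2})$, while the logarithmic term is $\mathcal O(\log(1/\epsilon))$. With $\delta$ bounded below and $\pi_\epsilon\le1$, the minimal $N$ is $\mathcal O(\epsilon^{-2})$. The remaining term in \eqref{ineq:Kepsexpectationbound} is bounded by $1/(1-e^{-c})$, a constant because $c\ge\delta^2/8$. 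Therefore $\mathbb E[K_\epsilon]=\mathcal O(\epsilon^{-2})$.
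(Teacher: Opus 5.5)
Your proof is correct and follows the paper's argument. You use the same modified indicators (set to one once $\|\nabla f(\boldsymbol{x}_k)\|\le\epsilon$), the same containment of $\{K_\epsilon\ge N\}$ via Lemma~\ref{lem:zero-aligned-count} at the threshold $\frac12(\pi_\epsilon+C_\gamma/(C_\gamma+1))N$, and the same tail-sum argument for the expectation. The only difference is that you derive the lower-tail Chernoff bound yourself through the exponential supermartingale estimate, whereas the paper cites it as \cite[Lemma~4.5]{gratton2015direct}.
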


    \begin{proof}
        For each $k$, let $I_k$ be the indicator of the event $\{\|\nabla f(\boldsymbol{x}_k)\|\le\epsilon\}\cup\{\Xi_k\ge\overline\tau_\epsilon\}$.  On $\{\|\nabla f(\boldsymbol{x}_k)\|\le\epsilon\}$, the indicator equals one, while on its complement, Assumption~\ref{ass:mean-xi} gives conditional probability at least $\pi_\epsilon$ that $\Xi_k\ge\overline\tau_\epsilon$.  Therefore, we have that $\mathbb P(I_k=1\mid\mathcal F_{k-1})\ge\pi_\epsilon$.  Applying \cite[Lemma~4.5]{gratton2015direct} with $p=\pi_\epsilon$, $\lambda=\frac12(\pi_\epsilon+\frac{C_\gamma}{C_\gamma+1})\in(0,\pi_\epsilon)$, and $k=N$, we obtain
        \begin{equation*}
            \mathbb{P}\left(\sum_{k=0}^{N-1} I_k\le\frac12\left(\pi_\epsilon+\frac{C_\gamma}{C_\gamma+1}\right)N\right)\le\exp\left(-\frac{\left(\pi_\epsilon-C_\gamma/(C_\gamma+1)\right)^2}{8\pi_\epsilon}N\right).
        \end{equation*}

        On $\{K_\epsilon\ge N\}$, all gradient norms at iterations $0,\ldots,N-1$ exceed $\epsilon$, so the sum of these indicators equals $|\mathcal A_N|$.  For $N$ satisfying the stated lower bound, Lemma~\ref{lem:zero-aligned-count} gives $|\mathcal A_N|\le\frac12(\pi_\epsilon+C_\gamma/(C_\gamma+1))N$.  Thus, we have
        \begin{equation*}
            \left\{K_\epsilon\ge N\right\}\subseteq\left\{\sum_{k=0}^{N-1} I_k\le\frac12\left(\pi_\epsilon+\frac{C_\gamma}{C_\gamma+1}\right)N\right\},
        \end{equation*}
        proving Inequality \eqref{ineq:Kepstailbound}.  Inequality \eqref{ineq:Kepsexpectationbound} follows from
        \begin{align*}
            \mathbb E[K_\epsilon] &=\sum_{j=1}^{\infty}\mathbb P(K_\epsilon\ge j)\\
            &\le N+\sum_{j=N}^{\infty}\mathbb P(K_\epsilon\ge j)\le N+\sum_{j=N}^{\infty}\exp\left(-\frac{(\pi_\epsilon-C_\gamma/(C_\gamma+1))^2}{8\pi_\epsilon}j\right)\\
            &\le N+\frac{\exp\left(-\frac{(\pi_\epsilon-C_\gamma/(C_\gamma+1))^2}{8\pi_\epsilon}N\right)}{1-\exp\left(-\frac{(\pi_\epsilon-C_\gamma/(C_\gamma+1))^2}{8\pi_\epsilon}\right)}.
        \end{align*}

        Finally, Equations \eqref{eq:recovery-scale} and \eqref{eq:zero-analysis-scale} imply that, for sufficiently small $\epsilon$, $\Delta_\epsilon$ is bounded below by a positive constant times $\sqrt{\overline\tau_\epsilon}\,\epsilon$.  Therefore $\phi_\epsilon=\mathcal O((\overline\tau_\epsilon\epsilon^2)^{-1})$, while the logarithmic term in the lower bound on $N$ is $\mathcal O(\log(1/\epsilon))$ when $\overline\tau_\epsilon$ is uniformly bounded below.  If $\pi_\epsilon-C_\gamma/(C_\gamma+1)$ is also uniformly bounded below, the required $N$ is $\mathcal O(\epsilon^{-2})$ and the second term on the right-hand side of Inequality \eqref{ineq:Kepsexpectationbound} is bounded above by a constant independent of $\epsilon$.  Hence, $\mathbb E[K_\epsilon]=\mathcal O(\epsilon^{-2})$.
    \end{proof}

    The probability bound can also be applied starting from any later iteration.  If its assumptions hold for a sequence of tolerances tending to zero, the algorithm reaches arbitrarily small gradient norms at arbitrarily late iterations with probability one.  This gives the following first-order liminf convergence result.
    \begin{theorem}\label{cor:as-liminf}
        Suppose that RETRO is run with $\mathrm{radi}_{\min}=0$.  Suppose that the assumptions of Theorem~\ref{thm:zero-radius-complexity} hold for every $\epsilon=1/m$, $m=1,2,\ldots$.  Then,
        \begin{equation*}
            \mathbb P\left(\liminf_{k\to\infty}\|\nabla f(\boldsymbol{x}_k)\|=0\right)=1.
        \end{equation*}
    \end{theorem}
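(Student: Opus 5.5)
The plan is to restart the tail bound of Theorem~\ref{thm:zero-radius-complexity} at an arbitrary later iteration, and then apply Borel--Cantelli. Fix $m$ and set $\epsilon=1/m$. It suffices to show that, for every $j\ge0$, the event $E_{m,j}:=\{\|\nabla f(\boldsymbol{x}_k)\|>\epsilon\text{ for all }k\ge j\}$ has probability zero. Indeed, the complement of $\{\liminf_k\|\nabla f(\boldsymbol{x}_k)\|=0\}$ is contained in $\bigcup_{m\ge1}\bigcup_{j\ge0}E_{m,j}$, which is a countable union of null events.

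To bound $\mathbb P(E_{m,j})$, I will view iterations $j,j+1,\ldots$ as a fresh run of RETRO. This run starts from the $\mathcal F_{j-1}$-measurable data $(\boldsymbol{x}_j,\boldsymbol{S}_j,\Delta_j)$ and uses the same filtration. Lemmas~\ref{lem:weighted-successful-decrease} to \ref{lem:zero-aligned-count} are deterministic, so their pathwise arguments carry over to the shifted index set $\{j,\ldots,j+N-1\}$. The only changes are that $\Delta_0$ is replaced by $\Delta_j$ and $f(\boldsymbol{x}_0)-f_{\mathrm{low}}$ by $f(\boldsymbol{x}_j)-f_{\mathrm{low}}\le f(\boldsymbol{x}_0)-f_{\mathrm{low}}$.

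The main obstacle is that $\Delta_j$ is random and may be very small. In that case the condition $\gamma_{\mathrm{inc}}^{\mathrm{ful}}\Delta\le\Delta_j$ in Lemma~\ref{lem:scalar-level-counting} fails for $\Delta=\Delta_\epsilon$, so the threshold $\Delta_\epsilon$ from Equation~\eqref{eq:zero-analysis-scale} must be recomputed with $\Delta_j$ in place of $\Delta_0$. I will work conditionally on $\mathcal F_{j-1}$. The resulting threshold $\Delta_\epsilon^{(j)}$ and the quantities $\phi^{(j)}_\epsilon$ and $N^{(j)}$ are then $\mathcal F_{j-1}$-measurable and finite almost surely, since $\Delta_j>0$. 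The logarithmic term $\log(\Delta_j/(\gamma^{\mathrm{ful}}_{\mathrm{inc}}\Delta^{(j)}_\epsilon))$ stays nonnegative and finite.

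The martingale concentration step, \cite[Lemma~4.5]{gratton2015direct}, applied to the indicators $I_j,\ldots,I_{j+N-1}$ conditionally on $\mathcal F_{j-1}$, then gives
\begin{equation*}
\mathbb P\bigl(\|\nabla f(\boldsymbol{x}_k)\|>\epsilon\text{ for }k=j,\ldots,j+N-1\mid\mathcal F_{j-1}\bigr)\le\exp\left(-\frac{(\pi_\epsilon-C_\gamma/(C_\gamma+1))^2}{8\pi_\epsilon}N\right)
\end{equation*}
for all $N\ge N^{(j)}$. This holds because Assumption~\ref{ass:mean-xi} is stated conditionally at every $k$, and the restart argument of Theorem~\ref{thm:zero-radius-complexity} goes through unchanged.

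Finally, $E_{m,j}$ is contained in each of these finite-horizon events. Letting $N\to\infty$ gives $\mathbb P(E_{m,j}\mid\mathcal F_{j-1})=0$ almost surely, and taking expectations gives $\mathbb P(E_{m,j})=0$. Taking the union over $m$ and $j$ then completes the proof.
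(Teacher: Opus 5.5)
Your proposal is correct and follows essentially the same route as the paper. Your null event $E_{m,j}$ is precisely the paper's $\{K_{\ell,m}=\infty\}$ with $\ell=j$. The paper likewise reruns the argument of Theorem~\ref{thm:zero-radius-complexity} conditionally on $\mathcal F_{j-1}$ with $(\boldsymbol{x}_j,\Delta_j)$ in place of $(\boldsymbol{x}_0,\Delta_0)$, lets $N\to\infty$, takes expectations, and combines over countably many $m$ and $j$. The only quibble is terminological: no Borel--Cantelli lemma is actually used, just countable subadditivity of null events.
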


    \begin{proof}
        Fix integers $\ell\ge0$ and $m\ge1$, and define
        \begin{equation*}
            K_{\ell,m}:=\inf\left\{k\ge\ell:\|\nabla f(\boldsymbol{x}_k)\|\le\frac{1}{m}\right\}.
        \end{equation*}
        Conditional on $\mathcal F_{\ell-1}$, we apply the argument in the proof of Theorem~\ref{thm:zero-radius-complexity} starting from iteration $\ell$, with $\epsilon=1/m$.  Here, $\boldsymbol{x}_0$ and $\Delta_0$ in the counting bound are replaced by $\boldsymbol{x}_\ell$ and $\Delta_\ell$, so the resulting lower bound on $N$ is finite and $\mathcal F_{\ell-1}$-measurable.  For all positive integers $N$ satisfying this bound, we have
        \begin{equation*}
            \mathbb P\left(K_{\ell,m}\ge\ell+N\mid\mathcal F_{\ell-1}\right)
            \le \exp\left(-\frac{(\pi_{1/m}-C_\gamma/(C_\gamma+1))^2}{8\pi_{1/m}}N\right).
        \end{equation*}
        Since $\pi_{1/m}>C_\gamma/(C_\gamma+1)$, letting $N\to\infty$, by the continuity of conditional probability, we get $\mathbb P(K_{\ell,m}<\infty\mid\mathcal F_{\ell-1})=1$.  By the law of total expectation, $\mathbb P(K_{\ell,m}<\infty)=\mathbb E[\mathbb P(K_{\ell,m}<\infty\mid\mathcal F_{\ell-1})]=1$ for every $\ell\ge0$ and $m\ge1$.

        Taking the countable intersection over all $\ell\ge0$ and $m\ge1$, with probability one, for every $\ell$ and $m$ there exists some $k\ge\ell$ such that $\|\nabla f(\boldsymbol{x}_k)\|\le 1\slash m$. It follows that $\liminf_{k\to\infty}\|\nabla f(\boldsymbol{x}_k)\|\le 1\slash m$ for every $m\ge1$ and the result follows.
    \end{proof}

    \begin{remark}
        Under the probabilistic $\alpha$-well-alignment condition used in \cite{cartis2023scalable,chen2024qfully,dzahini2024stochastic} (see Inequality~\eqref{eq:ordinary-alignment}), Inequality~\eqref{eq:zero-probability-condition} becomes
        $\delta_S<1/(C_\gamma+1)$. Thus,
        Theorems~\ref{thm:zero-radius-complexity} and \ref{cor:as-liminf}
        apply under this condition and their remaining assumptions.
    \end{remark}

    \begin{remark}\label{rem:radius-factor-choice}
        Since $C_\gamma=\max\{C_1,C_1^{-1}\}\ge1$, the right-hand side of Inequality~\eqref{eq:zero-probability-condition} is minimized when $C_1=1$, equivalently when $\gamma_{\mathrm{inc}}^{\mathrm{ful}}=1/\gamma_{\mathrm{dec}}^{\mathrm{ful}}$.  In that case $C_\gamma=1$, and the condition reduces to $\pi_\epsilon>1/2$.
    \end{remark}

\subsection{Subspace sampling}\label{subsec:subspacesampling}
    The convergence and complexity results above are stated under Assumption~\ref{ass:mean-xi} and Inequality~\eqref{eq:zero-probability-condition}, which requires the sampled subspace to achieve a prescribed level of ellipsoidal alignment with sufficiently high conditional probability.  We now show that this can be satisfied by concrete randomized subspace constructions. We consider two sampling strategies: Haar sampling, which draws the subspace uniformly without favoring any direction in the ambient Euclidean geometry, and metric-guided sampling, which incorporates the current ellipsoidal shape matrix into the sampling distribution. For each strategy, we derive an explicit alignment threshold and a corresponding probability bound, and show how these quantities can be chosen to satisfy the probability condition required in Theorem~\ref{thm:zero-radius-complexity}.

\subsubsection{Haar sampling}
    We first consider Haar sampling, which provides a natural isotropic baseline for subspace selection. At each iteration, $\boldsymbol{Q}_k$ is drawn uniformly from the set of $p$-dimensional orthonormal frames, independently of any preferred direction induced by the current ellipsoid. Equivalently, $\boldsymbol{Q}_k$ may be obtained as the orthonormal factor in a thin QR factorization of an $n\times p$ matrix with independent standard normal entries, with the diagonal entries of the triangular factor chosen positive \cite{chikuse2003statistics}. To verify Assumption~\ref{ass:mean-xi} and Inequality~\eqref{eq:zero-probability-condition} for this sampling rule, we first recall the distribution of the squared projection of a fixed vector onto a Haar-uniform subspace.
    
    \begin{lemma}\label{lem:Haar-beta}
        Let $\boldsymbol{U}\in\mathbb R^{n\times p}$ be Haar-uniform with orthonormal columns, where $1\le p\le n$, and let $\boldsymbol{v}$ be a fixed unit vector.  If $p<n$, then $\|\boldsymbol{U}^\top\boldsymbol{v}\|^2\sim\mathrm{Beta}(p\slash 2,(n-p)\slash 2)$; for $p=n$, the quantity equals one.  In all cases, we have
        \begin{equation}\label{eq:Haar-moments}
            \mathbb E\|\boldsymbol{U}^\top\boldsymbol{v}\|^2=\frac pn~~~\text{and}~~~\mathbb E\|\boldsymbol{U}^\top\boldsymbol{v}\|^4=\frac{p(p+2)}{n(n+2)}.
        \end{equation}
    \end{lemma}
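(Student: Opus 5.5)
The plan is to reduce the statement to a question about a uniformly random unit vector, and then read the distribution off from Gaussian representations.

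\emph{Step 1: rotation invariance.} Haar measure on $p$-frames is invariant under left multiplication by orthogonal matrices. Hence for any orthogonal $\boldsymbol{O}$ with $\boldsymbol{O}\boldsymbol{e}^1=\boldsymbol{v}$ we have
\begin{equation*}
\|\boldsymbol{U}^\top\boldsymbol{v}\|=\|(\boldsymbol{O}^\top\boldsymbol{U})^\top\boldsymbol{e}^1\|\overset{d}{=}\|\boldsymbol{U}^\top\boldsymbol{e}^1\|.
\end{equation*}
By the same invariance, the squared projection of a fixed vector onto a Haar subspace has the same law as the squared projection of a uniformly random unit vector $\boldsymbol{w}$ onto the fixed coordinate subspace $\mathrm{span}(\boldsymbol{e}^1,\ldots,\boldsymbol{e}^p)$. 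So it suffices to study $\sum_{i\le p}w_i^2$ for $\boldsymbol{w}$ uniform on the sphere. The cleanest route is to write $\boldsymbol{w}=\boldsymbol{g}/\|\boldsymbol{g}\|$ with $\boldsymbol{g}\sim\mathcal N(\boldsymbol 0_n,\boldsymbol I_n)$.

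\emph{Step 2: Beta law.} The quantity becomes $A/(A+B)$ with $A=\sum_{i\le p}g_i^2\sim\chi^2_p$ and $B=\sum_{i>p}g_i^2\sim\chi^2_{n-p}$, and $A,B$ independent. For $p<n$ the standard Gamma-ratio fact gives $A/(A+B)\sim\mathrm{Beta}(p/2,(n-p)/2)$. For $p=n$, $\boldsymbol{U}$ is orthogonal, so $\|\boldsymbol{U}^\top\boldsymbol{v}\|=\|\boldsymbol{v}\|=1$.

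\emph{Step 3: moments.} For $X\sim\mathrm{Beta}(a,b)$,
\begin{equation*}
\mathbb E X=\frac{a}{a+b},\qquad \mathbb E X^2=\frac{a(a+1)}{(a+b)(a+b+1)}.
\end{equation*}
Substituting $a=p/2$ and $a+b=n/2$ gives $p/n$ and $\frac{(p/2)(p/2+1)}{(n/2)(n/2+1)}=\frac{p(p+2)}{n(n+2)}$, which is Equation~\eqref{eq:Haar-moments}. When $p=n$ both formulas equal $1$, consistent with $X\equiv1$, so the moments hold in all cases.

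\emph{Main obstacle.} The only delicate point is justifying that the Haar frame can be swapped for a random vector against a fixed subspace. If that symmetry argument feels loose, I would instead use the QR construction of $\boldsymbol{U}=\boldsymbol{G}\boldsymbol{R}^{-1}$ together with invariance of $\boldsymbol{G}$ under left rotation. Alternatively, one can cite \cite{chikuse2003statistics} for the distribution directly.
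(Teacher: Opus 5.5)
Your proposal is correct and follows the same route as the paper. Rotational invariance reduces the problem to the squared projection of a uniform unit vector $\boldsymbol{Z}/\|\boldsymbol{Z}\|$ onto $\mathrm{span}(\boldsymbol{e}^1,\ldots,\boldsymbol{e}^p)$, and the ratio of independent chi-square variables then gives both the Beta law and the moments. The swap you flag as delicate is cleanest if you view $\boldsymbol{U}$ as the first $p$ columns of a Haar orthogonal matrix $\boldsymbol{O}$; then $\boldsymbol{U}^\top\boldsymbol{v}$ consists of the first $p$ coordinates of $\boldsymbol{O}^\top\boldsymbol{v}$, which is uniform on the sphere.
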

    
    \begin{proof}
        By rotational invariance of Haar-uniform matrices~\cite{chikuse2003statistics}, the distribution of $\|\boldsymbol{U}^\top\boldsymbol{v}\|^2$ is the same as the squared projection of a uniformly distributed unit vector onto $\mathrm{span}(\boldsymbol{e}^1,\ldots,\boldsymbol{e}^p)$.  Write this unit vector as $\boldsymbol{Z}/\|\boldsymbol{Z}\|$, where the components of $\boldsymbol{Z}$ are independent standard normal variables.  The squared projection is therefore $(Z_1^2+\cdots+Z_p^2)\slash(Z_1^2+\cdots+Z_n^2)$.  The numerator and the remaining sum $Z_{p+1}^2+\cdots+Z_n^2$ are independent chi-square variables, which gives the beta law and the moments in \eqref{eq:Haar-moments}.
    \end{proof}
    
    We first verify that Haar sampling satisfies Assumption~\ref{ass:mean-xi}.
    Suppose now that, conditionally on $\mathcal F_{k-1}$,
    $\boldsymbol{Q}_k$ is Haar-uniform. Whenever
    $\|\nabla f(\boldsymbol{x}_k)\|>\epsilon$, the normalized gradient is fixed
    after conditioning, since $\boldsymbol{x}_k$ is
    $\mathcal F_{k-1}$-measurable. Lemma~\ref{lem:Haar-beta} therefore shows that
    $\|\boldsymbol{Q}_k^\top\nabla f(\boldsymbol{x}_k)\|^2/
    \|\nabla f(\boldsymbol{x}_k)\|^2$ has the conditional distribution of
    $Z\sim\mathrm{Beta}(p/2,(n-p)/2)$ for $p<n$, while $Z=1$ for $p=n$.
    Hence, by Inequality~\eqref{eq:Xi-lower}, for every
    $\vartheta\in(0,1)$, Assumption~\ref{ass:mean-xi} holds with
    \begin{equation}\label{eq:Haar-threshold-constants}
        \overline\tau_\epsilon
        =\frac{\vartheta p}{n\overline\lambda_S}~~~\text{and}~~~\pi_\epsilon
        =\mathbb P\left(Z\ge\frac{\vartheta p}{n}\right)
        \ge(1-\vartheta)^2\frac{p(n+2)}{n(p+2)},
    \end{equation}
    because
    \begin{equation*}
        \mathbb P(\Xi_k\ge\overline\tau_\epsilon\mid\mathcal F_{k-1})
        \ge\mathbb P\left(
            \frac{\|\boldsymbol{Q}_k^\top\nabla f(\boldsymbol{x}_k)\|^2}
                 {\|\nabla f(\boldsymbol{x}_k)\|^2}
            \ge\frac{\vartheta p}{n}
            \ \middle|\ \mathcal F_{k-1}\right)
        =\pi_\epsilon.
    \end{equation*}
    For the explicit lower bound on $\pi_\epsilon$, note that Cauchy-Schwarz gives
    $$(1-\vartheta)\mathbb E[Z]\le\mathbb E[Z\mathbf1_{\{Z\ge\vartheta\mathbb E[Z]\}}]
    \le\sqrt{\mathbb E[Z^2]\mathbb P(Z\ge\vartheta\mathbb E[Z])}.$$
    Substituting the moments in Lemma~\ref{lem:Haar-beta} yields the bound.

    The lower bound $(1-\vartheta)^2p(n+2)/(n(p+2))$ need not be large enough to satisfy $\pi_\epsilon>C_\gamma/(C_\gamma+1)$. However, the probability $\pi_\epsilon=\mathbb P(Z\ge\vartheta p/n)$ tends to one as $\vartheta\downarrow0$, since $Z>0$ almost surely. Thus, for fixed $n$ and $p$, a sufficiently small fixed $\vartheta\in(0,1)$ ensures the required inequality, with both $\overline\tau_\epsilon$ and $\pi_\epsilon$ independent of $k$ and $\epsilon$.

\subsubsection{Metric-guided sampling}
    Haar sampling treats all Euclidean directions equally and therefore does not exploit the anisotropy encoded by the current ellipsoid. A natural alternative is to bias the sampled subspace according to the shape matrix $\boldsymbol{S}_k$. Since directions corresponding to smaller eigenvalues of $\boldsymbol{S}_k$ are less restricted by the ellipsoidal trust region, metric-guided sampling gives these directions greater representation in the sampled subspace. Specifically, conditionally on $\mathcal F_{k-1}$, we draw $\boldsymbol{U}_k\in\mathbb R^{n\times p}$ Haar-uniform, transform its columns by $\boldsymbol{S}_k^{-1/2}$, and then orthonormalize the resulting subspace. Thus, we set
    \begin{equation}\label{eq:metric-guided-sampling}
        \boldsymbol{A}_k:=\boldsymbol{S}_k^{-1/2}\boldsymbol{U}_k,
    \end{equation}
    and choose $\boldsymbol{Q}_k$ as an orthonormal basis for $\mathrm{col}(\boldsymbol{A}_k)$. To interpret this construction, write a step as $\boldsymbol{s}=\boldsymbol{S}_k^{-1/2}\boldsymbol{y}$. The ellipsoidal constraint then becomes $\|\boldsymbol{y}\|\le\Delta_k$, so we sample a subspace uniformly in the $\boldsymbol{y}$ coordinates, with no preferred direction. Multiplying its basis $\boldsymbol{U}_k$ by $\boldsymbol{S}_k^{-1/2}$ maps the sampled directions back to the original coordinates and gives $\boldsymbol{A}_k$. We then orthonormalize these directions to obtain $\boldsymbol{Q}_k$, without changing the subspace they span.

    The next result characterizes the ellipsoidal subspace alignment for this sampling rule. It shows that the random projection component has the same beta distribution as in the Haar case, while the effect of the current ellipsoid enters through a deterministic factor depending on the orientation of the gradient relative to the shape matrix. This yields both a geometry-dependent description of the typical alignment and, using the uniform eigenvalue bounds on $\boldsymbol{S}_k$, an iteration-independent lower bound sufficient to verify Assumption~\ref{ass:mean-xi} and Inequality~\eqref{eq:zero-probability-condition}.
    \begin{proposition}\label{prop:metric-guided-beta}
        Under metric-guided sampling, whenever
        $\nabla f(\boldsymbol{x}_k)\ne\boldsymbol{0}_n$,
        \begin{equation}\label{eq:guided-alignment-distribution}
            \Xi_k\mid\mathcal F_{k-1}\ \overset{d}{=}
            \frac{\nabla f(\boldsymbol{x}_k)^\top\boldsymbol{S}_k^{-1}\nabla f(\boldsymbol{x}_k)}
                 {\|\nabla f(\boldsymbol{x}_k)\|^2}\,Z~~~\text{and}~~~\mathbb E[\Xi_k\mid\mathcal F_{k-1}]
            =\frac pn\,
            \frac{\nabla f(\boldsymbol{x}_k)^\top\boldsymbol{S}_k^{-1}\nabla f(\boldsymbol{x}_k)}
                 {\|\nabla f(\boldsymbol{x}_k)\|^2},
        \end{equation}
        where $Z\sim\mathrm{Beta}(p/2,(n-p)/2)$ for $p<n$ and $Z=1$ for
        $p=n$. Under Inequality~\eqref{ineq:shape-bounds-Sk},
        Assumption~\ref{ass:mean-xi} therefore holds for every $\epsilon>0$
        with the constants in Equations~\eqref{eq:Haar-threshold-constants} for any fixed
        $\vartheta\in(0,1)$.
    \end{proposition}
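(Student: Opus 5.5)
Write $\boldsymbol g:=\nabla f(\boldsymbol x_k)$ and $\boldsymbol S:=\boldsymbol S_k$; both are fixed after conditioning on $\mathcal F_{k-1}$. The plan is to rewrite $\Xi_k$ in the whitened coordinates $\boldsymbol y=\boldsymbol S^{1/2}\boldsymbol s$, where it becomes a Haar projection of a fixed vector, and then apply Lemma~\ref{lem:Haar-beta}.

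\emph{Basis invariance.} By the remark after Definition~\ref{def:ellipsubalignment}, $\Xi_k$ depends only on $\mathrm{col}(\boldsymbol Q_k)=\mathrm{col}(\boldsymbol A_k)$. The key identity is that for any full-column-rank $\boldsymbol A$,
\begin{equation*}
\boldsymbol g^\top\boldsymbol Q(\boldsymbol Q^\top\boldsymbol S\boldsymbol Q)^{-1}\boldsymbol Q^\top\boldsymbol g=\boldsymbol g^\top\boldsymbol A(\boldsymbol A^\top\boldsymbol S\boldsymbol A)^{-1}\boldsymbol A^\top\boldsymbol g .
\end{equation*}
This holds because $\boldsymbol A=\boldsymbol Q\boldsymbol R$ with $\boldsymbol R$ invertible, so the factors of $\boldsymbol R$ cancel.

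\emph{Whitening.} Substitute $\boldsymbol A=\boldsymbol S^{-1/2}\boldsymbol U_k$. Then $\boldsymbol A^\top\boldsymbol S\boldsymbol A=\boldsymbol U_k^\top\boldsymbol U_k=\boldsymbol I_p$ and $\boldsymbol A^\top\boldsymbol g=\boldsymbol U_k^\top\boldsymbol S^{-1/2}\boldsymbol g$. Hence
\begin{equation*}
\Xi_k=\frac{\|\boldsymbol U_k^\top\boldsymbol w\|^2}{\|\boldsymbol g\|^2},\qquad \boldsymbol w:=\boldsymbol S^{-1/2}\boldsymbol g .
\end{equation*}
Factor out $\|\boldsymbol w\|^2=\boldsymbol g^\top\boldsymbol S^{-1}\boldsymbol g$ and apply Lemma~\ref{lem:Haar-beta} to the fixed unit vector $\boldsymbol w/\|\boldsymbol w\|$; this is legitimate because $\boldsymbol U_k$ is Haar conditionally on $\mathcal F_{k-1}$. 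This gives the distributional identity \eqref{eq:guided-alignment-distribution}, and the conditional expectation follows from $\mathbb E Z=p/n$.

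\emph{Assumption~\ref{ass:mean-xi}.} Inequality~\eqref{ineq:shape-bounds-Sk} gives $\boldsymbol S^{-1}\succeq\overline\lambda_S^{-1}\boldsymbol I_n$, so the deterministic factor is at least $1/\overline\lambda_S$. Therefore
\begin{equation*}
\mathbb P\bigl(\Xi_k\ge\vartheta p/(n\overline\lambda_S)\bigm|\mathcal F_{k-1}\bigr)\ge\mathbb P(Z\ge\vartheta p/n).
\end{equation*}
This is exactly the pair of constants in \eqref{eq:Haar-threshold-constants}, including the Paley--Zygmund-type bound already derived there. We also need $\overline\tau_\epsilon\le1/\underline\lambda_S$, which holds since $\vartheta p/(n\overline\lambda_S)\le 1/\overline\lambda_S\le 1/\underline\lambda_S$. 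These constants do not depend on $k$ or $\epsilon$.

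The main point needing care is the conditioning. The argument requires $\boldsymbol U_k$ to be Haar given $\mathcal F_{k-1}$, with $\boldsymbol w$ being $\mathcal F_{k-1}$-measurable, so that Lemma~\ref{lem:Haar-beta} applies conditionally. Everything else is algebra.
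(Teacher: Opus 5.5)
Your proposal is correct and follows the paper's proof essentially step for step: you use the factorization $\boldsymbol A_k=\boldsymbol Q_k\boldsymbol R_k$ to reduce to $\boldsymbol A_k(\boldsymbol A_k^\top\boldsymbol S_k\boldsymbol A_k)^{-1}\boldsymbol A_k^\top=\boldsymbol S_k^{-1/2}\boldsymbol U_k\boldsymbol U_k^\top\boldsymbol S_k^{-1/2}$, apply Lemma~\ref{lem:Haar-beta} conditionally to the normalized vector $\boldsymbol S_k^{-1/2}\nabla f(\boldsymbol x_k)$, and conclude with $\boldsymbol S_k^{-1}\succeq\overline\lambda_S^{-1}\boldsymbol I_n$. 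Your explicit check that $\overline\tau_\epsilon=\vartheta p/(n\overline\lambda_S)\le 1/\underline\lambda_S$, as Assumption~\ref{ass:mean-xi} requires, is a small detail the paper leaves implicit.
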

    \begin{proof}
        Conditionally on $\mathcal F_{k-1}$, $\boldsymbol{S}_k$ and $\nabla f(\boldsymbol{x}_k)$ are fixed, while $\boldsymbol{U}_k$ is Haar-uniform.  Since $\boldsymbol{A}_k$ has full column rank and $\boldsymbol{Q}_k$ is an orthonormal basis for its column space, $\boldsymbol{R}_k:=\boldsymbol{Q}_k^\top\boldsymbol{A}_k\in\mathbb R^{p\times p}$ is invertible and $\boldsymbol{A}_k=\boldsymbol{Q}_k\boldsymbol{R}_k$.  Moreover, $\boldsymbol{A}_k^\top\boldsymbol{S}_k\boldsymbol{A}_k=\boldsymbol{U}_k^\top\boldsymbol{U}_k=\boldsymbol{I}_p$.  Therefore,
        \begin{align*}
            \boldsymbol{Q}_k(\boldsymbol{Q}_k^\top\boldsymbol{S}_k\boldsymbol{Q}_k)^{-1}\boldsymbol{Q}_k^\top &=\boldsymbol{Q}_k\boldsymbol{R}_k
            (\boldsymbol{R}_k^\top\boldsymbol{Q}_k^\top\boldsymbol{S}_k\boldsymbol{Q}_k\boldsymbol{R}_k)^{-1}
            \boldsymbol{R}_k^\top\boldsymbol{Q}_k^\top\\
            &=\boldsymbol{A}_k(\boldsymbol{A}_k^\top\boldsymbol{S}_k\boldsymbol{A}_k)^{-1}\boldsymbol{A}_k^\top
            =\boldsymbol{A}_k\boldsymbol{A}_k^\top
            =\boldsymbol{S}_k^{-1/2}\boldsymbol{U}_k\boldsymbol{U}_k^\top\boldsymbol{S}_k^{-1/2}.
        \end{align*}
        Hence,
        \begin{equation*}
            \Xi_k
            =\frac{\|\boldsymbol{U}_k^\top\boldsymbol{S}_k^{-1/2}
                         \nabla f(\boldsymbol{x}_k)\|^2}
                   {\|\boldsymbol{S}_k^{-1/2}\nabla f(\boldsymbol{x}_k)\|^2}
             \frac{\|\boldsymbol{S}_k^{-1/2}\nabla f(\boldsymbol{x}_k)\|^2}
                  {\|\nabla f(\boldsymbol{x}_k)\|^2}.
        \end{equation*}
        Applying Lemma~\ref{lem:Haar-beta} to
        $\boldsymbol{S}_k^{-1/2}\nabla f(\boldsymbol{x}_k)/
        \|\boldsymbol{S}_k^{-1/2}\nabla f(\boldsymbol{x}_k)\|$, we obtain Equations~\eqref{eq:guided-alignment-distribution}. Moreover,
        Inequality~\eqref{ineq:shape-bounds-Sk} implies
        $\boldsymbol{S}_k^{-1}\succeq
        (1/\overline\lambda_S)\boldsymbol{I}_n$.
        Thus, whenever $\|\nabla f(\boldsymbol{x}_k)\|>\epsilon$,
        \begin{equation*}
            \mathbb P(\Xi_k\ge\overline\tau_\epsilon\mid\mathcal F_{k-1})
            \ge\mathbb P(Z\ge\overline\lambda_S\overline\tau_\epsilon)
            =\mathbb P(Z\ge\vartheta p/n)=\pi_\epsilon,
        \end{equation*}
        which verifies Assumption~\ref{ass:mean-xi}.
    \end{proof}
    
    Proposition~\ref{prop:metric-guided-beta} shows how the conditional mean alignment depends on the direction of the gradient relative to the ellipsoid. For example, suppose that the nonzero gradient is parallel to an eigenvector of $\boldsymbol{S}_k$ with eigenvalue $\lambda$. In this case, multiplication by $\boldsymbol{S}_k^{-1}$ divides the gradient by $\lambda$, so the conditional mean formula gives $\mathbb E[\Xi_k\mid\mathcal F_{k-1}]=p/(n\lambda)$. Since a smaller eigenvalue corresponds to a longer ellipsoid axis, the mean alignment is larger when the gradient points along a longer axis.  The proposition also gives the same uniform probability bound as in the Haar case, so choosing a sufficiently small fixed $\vartheta>0$ ensures Inequality~\eqref{eq:zero-probability-condition}.

    \begin{remark}
        For fixed $n$ and $p$, choose $\vartheta$ so that Inequality~\eqref{eq:zero-probability-condition} holds. Then,  for either sampling rule, $\overline\tau_\epsilon$ and $\pi_\epsilon$ are independent of $\epsilon$, and Equations \eqref{eq:recovery-scale} and \eqref{eq:zero-analysis-scale} give $\Delta_\epsilon=\Theta(\epsilon)$ as $\epsilon\downarrow0$.  Therefore, Theorem~\ref{thm:zero-radius-complexity} gives $\mathbb E[K_\epsilon]=\mathcal O(\epsilon^{-2})$.
    \end{remark}

\section{Numerical experiments}\label{sec:numexp}\label{num:ten-start}
    In this section, we compare four practical RETRO variants with QARSTA~\cite{chen2024qfully} on low-rank projections of 126 test functions in $\mathbb R^{1000}$. The experiments examine how directional information stored in the ellipsoid can reduce function evaluation costs.  In particular, we compare the two sampling strategies introduced in Subsection~\ref{subsec:subspacesampling} and investigate the benefit of retaining a longest ellipsoid axis in the selected subspace. We first consider projection rank $r=5$, equal to the solver subspace dimension $p=5$, and then increase $r$ while keeping $p$ fixed.

\subsection{Test problems and solver settings}
    We select 126 unconstrained problems from CUTEst~\cite{gould2015cutest} (96 problems) and YATSOp~\cite{yatsop} (30 problems), all with dimension $n=1000$. These comprise all problems in these two collections that admit this dimension and have finite objective values at the prescribed starting point $\boldsymbol x_0$. For each base function $f$ and its supplied starting point $\boldsymbol x_0$, we orthonormalize a Gaussian matrix to obtain $\boldsymbol U\in\mathbb R^{1000\times r}$ and define the projected objective function
    \begin{equation*}
        f_r(\boldsymbol x) := f\bigl(\boldsymbol x_0+\boldsymbol U\boldsymbol U^\top(\boldsymbol x-\boldsymbol x_0)\bigr),~~~\text{where}~~~\boldsymbol U^\top\boldsymbol U=\boldsymbol I_r.
    \end{equation*}
    This function is constant along directions orthogonal to $\mathrm{col}(\boldsymbol U)$ and has effective dimension at most $r$. This construction gives a controlled instance of the low-effective-dimensional structure studied in random-embedding and active-subspace methods \cite{cartis2022dimensionality,constantine2015active,wang2016bayesian}. It allows us to vary the number of potentially active directions while keeping the ambient dimension and the base functions fixed.

    For each base problem and each projection rank, we use three random seeds to generate three projected objectives. Each objective is fixed throughout a run and shared by all five solvers, giving $126\times3=378$ cases for each rank. The solvers start from $\boldsymbol x_0$ and have no access to $\boldsymbol U$. Thus, any benefit from the fixed active subspace must be obtained through function evaluations. This setting is useful for examining RETRO: the ellipsoid updates in Subsection~\ref{subsec:ellipsoidupdate} retain information from successive subspace steps, and the sampling rules can use this information when choosing later subspaces.

    Table~\ref{tab:num-solvers} summarizes the five solvers. RETRO-H and RETRO-G use the Haar and metric-guided sampling rules, respectively, from Subsection~\ref{subsec:subspacesampling}. Both use the ellipsoidal updates of Algorithm~\ref{alg:main}. RETRO-H therefore provides a comparison with ball-based QARSTA under isotropic sampling, while the comparison between RETRO-H and RETRO-G examines the additional benefit of using the shape matrix to guide subspace selection.

    The other two RETRO variants add a heuristic that includes a longest axis of the current ellipsoid in the selected $p$-dimensional subspace. Such an axis corresponds to an eigenvector of $\boldsymbol S_k$ associated with its smallest eigenvalue. Since the shape matrix is updated according to previous iteration outcomes, this axis may provide a useful direction for subsequent steps. We denote these variants by RETRO-H+LA and RETRO-G+LA and treat longest-axis retention as a heuristic addition to the two sampling rules.
    \begin{table}[!ht]
    \centering
    \caption{QARSTA~\cite{chen2024qfully} and four RETRO variants. H denotes Haar sampling; G denotes metric-guided sampling using $\boldsymbol S_k^{-1/2}$; LA denotes retention of a longest ellipsoid axis within the $p$-dimensional subspace. The RETRO variants otherwise share the same models and update rules.}
    \label{tab:num-solvers}
    \begin{tabular}{@{}llll@{}}
    \toprule
        Solver & Trust region & Random directions & Retained axis\\
        \midrule
        QARSTA & Ball & Isotropic & None\\
        RETRO-H & Ellipsoid & Isotropic & None\\
        RETRO-G & Ellipsoid & Metric-guided & None\\
        RETRO-H+LA & Ellipsoid & Isotropic & Longest\\
        RETRO-G+LA & Ellipsoid & Metric-guided & Longest\\
    \bottomrule
    \end{tabular}
    \end{table}
    \FloatBarrier

    Each run has a budget of 5000 function evaluations. All solvers use determined quadratic interpolation models, $p=5$, $\Delta_0=0.1$, $\eta_1=0.1$, and $\eta_2=0.7$. The stopping thresholds are $\Delta_{\min}=10^{-8}$ for QARSTA and $\mathrm{radi}_{\min}=10^{-8}$ for RETRO. The contraction and expansion factors are $0.5$ and $2$, respectively. In particular, for RETRO, $\gamma_{\mathrm{dec}}^{\mathrm{sub}}=\gamma_{\mathrm{dec}}^{\mathrm{ful}}=0.5$ and $\gamma_{\mathrm{inc}}^{\mathrm{sub}}=\gamma_{\mathrm{inc}}^{\mathrm{ful}}=2$. We set $\boldsymbol S_0=\boldsymbol I_n$, $\underline\lambda_S=0.01$, and $\overline{\lambda}_S=1$. Hence, all solvers start with the same ball trust region, and the RETRO variants adapt its shape during the run.

    We compare the solvers using performance profiles~\cite{dolan2002benchmarking} based on function evaluation counts. For case $j$, let $f_{\mathrm{ref},j}$ be the smallest finite objective value observed across all five solvers within the evaluation budget. For solver $s$, let $t_{j,s}(\tau)$ be the first function evaluation count at which
    \begin{equation*}
        f_r(\boldsymbol x)\le f_{\mathrm{ref},j} +\tau\bigl(f_r(\boldsymbol x_{0,j})-f_{\mathrm{ref},j}\bigr),
    \end{equation*}
    where $\tau\in(0,1)$ is the desired accuracy level. Smaller values of $\tau$ give stricter targets relative to the best value observed. If the target is not reached within the evaluation budget, we set $t_{j,s}(\tau)=\infty$. For each solver $s$, its performance profile is
    \begin{equation*}
        \rho_s(\alpha;\tau):=\frac1N\left|\left\{j:r_{j,s}(\tau)\le\alpha\right\}\right|,~~~\text{where}~~~r_{j,s}(\tau):=\frac{t_{j,s}(\tau)}{\min_\ell t_{j,\ell}(\tau)}~~~\text{and}~~~N=378.
    \end{equation*}
    Thus, $\rho_s(1;\tau)$ is the proportion of cases on which solver $s$ reaches the target with the fewest function evaluations, including ties. For $\alpha>1$, the profile gives the proportion solved within $\alpha$ times the best evaluation count for each case.

\subsection{Results when $r=p=5$}
    Figure~\ref{fig:num-lowrank} presents the performance profiles for $r=p=5$ at the three accuracy levels $\tau=10^{-1},10^{-3},10^{-5}$. We first compare QARSTA, RETRO-H, and RETRO-G to examine the effect of using the ellipsoid in subspace selection. The profiles of QARSTA and RETRO-H nearly overlap, whereas RETRO-G generally lies above both, particularly for performance ratios close to one. Thus, for these problems, using the shape matrix to guide sampling gives a clearer improvement than changing the trust-region shape under isotropic sampling. This is consistent with the role of $\boldsymbol S_k$ in Subsection~\ref{subsec:subspacesampling}: metric-guided sampling favors directions along which the current ellipsoid permits longer steps.

    Retaining a longest axis gives a further improvement. At the low accuracy level $\tau=10^{-1}$, RETRO-H+LA and RETRO-G+LA attain the smallest evaluation count on 282 and 281 cases, respectively, corresponding to 74.6\% and 74.3\% of the 378 cases. The corresponding proportions are 27.5\% for both QARSTA and RETRO-H, and 38.6\% for RETRO-G. Ties are counted for every solver attaining the minimum, so these proportions need not sum to 100\%. The advantage also persists beyond $\alpha=1$. For example, both +LA variants solve 339 cases within $1.2$ times the best evaluation count, compared with 243, 242, and 264 cases for QARSTA, RETRO-H, and RETRO-G, respectively.

    At the higher accuracy levels, the +LA variants continue to perform best overall, although the differences between the profiles become smaller as $\alpha$ increases. Taken together, these results demonstrate an advantage of RETRO on problems with a fixed low-dimensional active subspace: the ellipsoid provides directional information that can be reused in subspace selection, reducing function evaluation costs without prior knowledge of the active subspace.
    \begin{figure}[htbp]
    \centering
    \includegraphics[width=\linewidth]{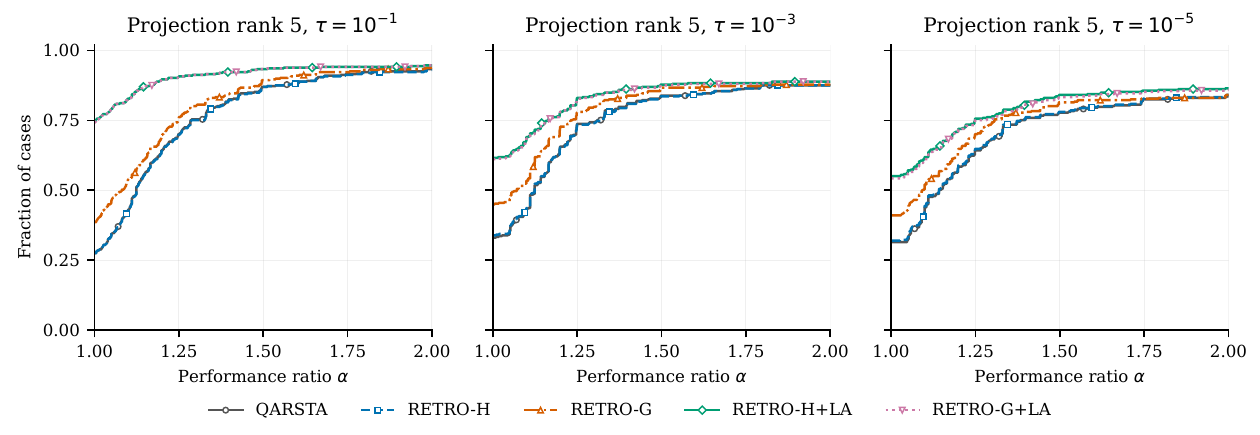}
    \caption{Performance profiles based on function evaluation counts for projection rank $r=5$, with $p=5$, $n=1000$, and 378 cases. The accuracy levels are $\tau=10^{-1},10^{-3},10^{-5}$ from left to right.}
    \label{fig:num-lowrank}
    \end{figure}
    \FloatBarrier

\subsection{Effect of the projection rank}
    We next repeat the comparison using the same 126 base functions with projection ranks $r=10,20,50$, keeping the solver subspace dimension $p=5$ and all other settings fixed. These tests examine how the benefits observed for $r=p$ change when each step is restricted to a subspace of smaller dimension than the projection space. Figure~\ref{fig:num-dimension-intermediate} presents the results.
    \begin{figure}[tbp]
    \centering
    \includegraphics[width=\linewidth]{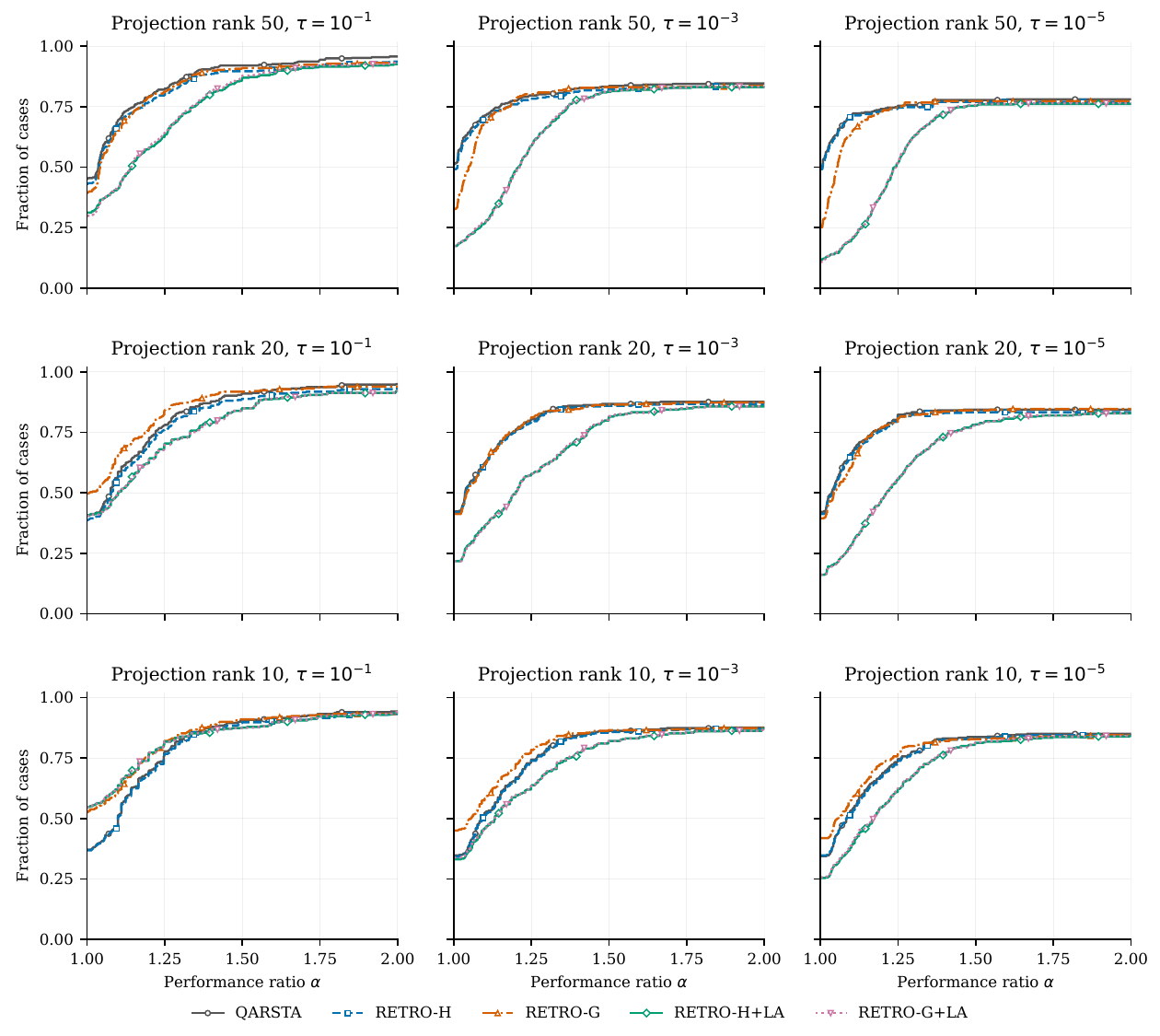}
    \caption{Performance profiles based on function evaluation counts for projection ranks $r=50,20,10$ from top to bottom, with $p=5$, $n=1000$, and 378 cases per rank. The accuracy levels are $\tau=10^{-1},10^{-3},10^{-5}$ from left to right.}
    \label{fig:num-dimension-intermediate}
    \end{figure}

    For $r=10$, retaining a longest axis still gives an advantage near $\alpha=1$ at the low accuracy level $\tau=10^{-1}$, but this advantage is lost at the two higher accuracy levels. RETRO-G remains competitive and generally improves on QARSTA and RETRO-H near $\alpha=1$. For $r=20$ and $r=50$, the +LA variants are less competitive, especially at the higher accuracy levels. The benefit of metric-guided sampling also becomes less pronounced: RETRO-G improves the low-accuracy profile for $r=20$, while its higher-accuracy profiles are close to those of QARSTA and RETRO-H; for $r=50$, it offers no consistent improvement.

    These results suggest that the usefulness of emphasizing previously explored directions depends on the projection rank relative to the solver subspace dimension. When more active directions must be explored with $p$ fixed, repeatedly including one longest axis may limit the benefit of the other directions in the sampled subspace. This provides a possible explanation for the reduced performance of the +LA variants as $r$ increases. The experiments therefore identify a setting in which RETRO is particularly effective: when the projection rank is close to the solver subspace dimension, using the ellipsoid to guide or retain directions can substantially reduce function evaluation costs. They also motivate adapting the strength of this directional preference, or the subspace dimension, when more active directions need to be explored.

\section{Conclusion}\label{sec:conclusion}
    In this paper, we proposed RETRO, a random-subspace trust-region framework for unconstrained derivative-free optimization. RETRO combines global changes in trust-region size with additional updates within the sampled subspace. We introduced an ellipsoidal subspace alignment measure and verified the required alignment properties for Haar-uniform and metric-guided sampling. Under the stated assumptions, we established almost-sure first-order liminf convergence and an $\mathcal{O}(\epsilon^{-2})$ expected iteration bound for reaching an iterate with gradient norm at most $\epsilon$.  Numerical experiments on projected low-rank problems in dimension $1000$ showed metric-guided sampling and longest-axis retention substantially reduced function evaluation costs relative to QARSTA when the projection rank matched the solver subspace dimension. These benefits became less consistent as the projection rank increased with the solver subspace dimension fixed.

\begin{acknowledgements}
    The authors sincerely thank Dr.~Warren Hare for helpful discussions and valuable suggestions regarding this work. 
\end{acknowledgements}

\bibliographystyle{siam}
\bibliography{references}

\begin{thebibliography}{10}

\bibitem{alarie2021two}
{\sc S.~Alarie, C.~Audet, A.~E. Gheribi, M.~Kokkolaras, and S.~Le~Digabel}, {\em Two decades of blackbox optimization applications}, EURO Journal on Computational Optimization, 9 (2021), p.~100011.

\bibitem{audet2017derivative}
{\sc C.~Audet and W.~Hare}, {\em Derivative-Free and Blackbox Optimization}, Springer, Cham, 2017.

\bibitem{audet2020model}
\leavevmode\vrule height 2pt depth -1.6pt width 23pt, {\em Model-based methods in derivative-free nonsmooth optimization}, Springer, Cham, 2020, pp.~655--691.

\bibitem{bandeira2014convergence}
{\sc A.~S. Bandeira, K.~Scheinberg, and L.~N. Vicente}, {\em Convergence of trust-region methods based on probabilistic models}, SIAM Journal on Optimization, 24 (2014), pp.~1238--1264.

\bibitem{cao2024first}
{\sc L.~Cao, A.~S. Berahas, and K.~Scheinberg}, {\em First- and second-order high probability complexity bounds for trust-region methods with noisy oracles}, Mathematical Programming, 207 (2024), pp.~55--106.

\bibitem{cartis2022dimensionality}
{\sc C.~Cartis and A.~Otemissov}, {\em A dimensionality reduction technique for unconstrained global optimization of functions with low effective dimensionality}, Information and Inference: A Journal of the IMA, 11 (2022), pp.~167--201.

\bibitem{cartis2023scalable}
{\sc C.~Cartis and L.~Roberts}, {\em Scalable subspace methods for derivative-free nonlinear least-squares optimization}, Mathematical Programming, 199 (2023), pp.~461--524.

\bibitem{cartis2026randomized}
\leavevmode\vrule height 2pt depth -1.6pt width 23pt, {\em Randomized subspace derivative-free optimization with quadratic models and second-order convergence}, Optimization Methods and Software,  (2026).

\bibitem{chen2018stochastic}
{\sc R.~Chen, M.~Menickelly, and K.~Scheinberg}, {\em Stochastic optimization using a trust-region method and random models}, Mathematical Programming, 169 (2018), pp.~447--487.

\bibitem{chen2026relationships}
{\sc Y.~Chen}, {\em Relationships between full-space and subspace quadratic interpolation models and simplex derivatives}, 2026.
\newblock \href{https://arxiv.org/abs/2602.10374}{arXiv 2602.10374}.

\bibitem{chen2026accuracy}
{\sc Y.~Chen, W.~Hare, and L.~Roberts}, {\em Accuracy and relationships of quadratic models in derivative-free optimization}, 2026.
\newblock \href{https://arxiv.org/abs/2605.12819}{arXiv 2605.12819}.

\bibitem{chen2024qfully}
{\sc Y.~Chen, W.~Hare, and A.~Wiebe}, {\em Q-fully quadratic modeling and its application in a random subspace derivative-free method}, Computational Optimization and Applications, 89 (2024), pp.~317--360.

\bibitem{chen2026clarsta}
\leavevmode\vrule height 2pt depth -1.6pt width 23pt, {\em {CLARSTA}: A random subspace trust-region algorithm for convex-constrained derivative-free optimization}, 2026.
\newblock \href{https://arxiv.org/abs/2506.20335}{arXiv 2506.20335}.

\bibitem{chikuse2003statistics}
{\sc Y.~Chikuse}, {\em Statistics on Special Manifolds}, vol.~174 of Lecture Notes in Statistics, Springer, New York, 2003.

\bibitem{conn2000trust}
{\sc A.~R. Conn, N.~I.~M. Gould, and P.~L. Toint}, {\em Trust-Region Methods}, SIAM, Philadelphia, 2000.

\bibitem{conn2009introduction}
{\sc A.~R. Conn, K.~Scheinberg, and L.~N. Vicente}, {\em Introduction to derivative-free optimization}, SIAM, 2009.

\bibitem{constantine2015active}
{\sc P.~G. Constantine}, {\em Active subspaces: {E}merging ideas for dimension reduction in parameter studies}, SIAM, 2015.

\bibitem{dolan2002benchmarking}
{\sc E.~D. Dolan and J.~J. Mor{\'e}}, {\em Benchmarking optimization software with performance profiles}, Mathematical Programming, 91 (2002), pp.~201--213.

\bibitem{dzahini2024stochastic}
{\sc K.~J. Dzahini and S.~M. Wild}, {\em Stochastic trust-region algorithm in random subspaces with convergence and expected complexity analyses}, SIAM Journal on Optimization, 34 (2024), pp.~2671--2699.

\bibitem{gould2015cutest}
{\sc N.~I.~M. Gould, D.~Orban, and P.~L. Toint}, {\em {CUTEst}: a constrained and unconstrained testing environment with safe threads for mathematical optimization}, Computational Optimization and Applications, 60 (2015), pp.~545--557.

\bibitem{gratton2015direct}
{\sc S.~Gratton, C.~W. Royer, L.~N. Vicente, and Z.~Zhang}, {\em Direct search based on probabilistic descent}, SIAM Journal on Optimization, 25 (2015), pp.~1515--1541.

\bibitem{hu2026matro}
{\sc W.~Hu, P.~Xie, Y.-X. Yuan, and L.~Zhang}, {\em {MATRO}: Metric-aware trust-region optimization with fully quadratic models}, 2026.
\newblock \href{https://arxiv.org/abs/2605.30845}{arXiv 2605.30845}.

\bibitem{larson2019derivative}
{\sc J.~Larson, M.~Menickelly, and S.~M. Wild}, {\em Derivative-free optimization methods}, Acta Numerica, 28 (2019), pp.~287--404.

\bibitem{liuzzi2019trust}
{\sc G.~Liuzzi, S.~Lucidi, F.~Rinaldi, and L.~N. Vicente}, {\em Trust-region methods for the derivative-free optimization of nonsmooth black-box functions}, SIAM Journal on Optimization, 29 (2019), pp.~3012--3035.

\bibitem{yatsop}
{\sc {POptUS}}, {\em {YATSOp}: Yet another test set for optimization}.
\newblock GitHub repository.
\newblock Accessed October 13, 2026.

\bibitem{wang2016bayesian}
{\sc Z.~Wang, F.~Hutter, M.~Zoghi, D.~Matheson, and N.~De~Freitas}, {\em Bayesian optimization in a billion dimensions via random embeddings}, Journal of Artificial Intelligence Research, 55 (2016), pp.~361--387.

\end{thebibliography}
\end{document}